\newtheorem{theorem}{Theorem}[section]
\newtheorem{lemma}[theorem]{Lemma}
\theoremstyle{definition}
\newtheorem{definition}[theorem]{Definition}
\newtheorem{proposition}[theorem]{Proposition}
\theoremstyle{remark}
\newtheorem{remark}[theorem]{Remark}
\numberwithin{equation}{section}
\DeclareMathOperator{\diver}{div}
\begin{document}

\title[Parabolic differential inequalities in exterior domains]{Nonexistence for parabolic differential inequalities with convection terms in exterior domains}


\author[M. Jleli]{Mohamed Jleli}
\address{(M. Jleli) Department of Mathematics, College of Science, King Saud University, Riyadh 11451, Saudi Arabia}
\curraddr{}
\email{jleli@ksu.edu.sa}
\thanks{}

\author[B. Samet]{Bessem Samet}
\address{(B. Samet) Department of Mathematics, College of Science, King Saud University, Riyadh 11451, Saudi Arabia}
\curraddr{}
\email{bsamet@ksu.edu.sa}
\thanks{}

\author[Y. Sun]{Yuhua Sun}
\address{(Y. Sun) School of Mathematical Sciences and LPMC, Nankai University, 300071 Tianjin, 
P.R. China}
\curraddr{}
\email{sunyuhua@nankai.edu.cn}
\thanks{}

\subjclass[2010]{35K58; 	35B44; 	35B33}

\keywords{Parabolic differential inequalities; convection term; exterior domain; non-homogeneous boundary conditions; nonexistence; critical exponent}

\date{}

\dedicatory{}

\begin{abstract}
We are concerned with the nonexistence of sign-changing global weak solutions for a  class of semilinear  parabolic differential inequalities with convection terms in exterior domains. A weight function of the form $t^\alpha |x|^\sigma$  is considered in front of the power nonlinearity. Two types of non-homogeneous boundary conditions are investigated: Neumann-type and Dirichlet-type boundary conditions. Using a unified approach,  for each case, we establish sufficient criteria  for the nonexistence of global weak solutions. When $\alpha=0$, the critical exponent in the sense of Fujita is obtained. This exponent is bigger than that found previously by Zheng and  Wang (2008)  in the case of homogeneous Neumann and Dirichlet boundary conditions. 
\end{abstract}

\maketitle

\tableofcontents

\section{Introduction}\label{sec1}

This paper is devoted to the study of  nonexistence of  global weak solutions to  parabolic differential inequalities of the form
\begin{equation}\label{P}
\partial_t u-\Delta u-k \frac{x}{|x|^2}\cdot \nabla u\geq t^\alpha 
|x|^{\sigma}  |u|^p,\quad (t,x)\in (0,\infty)\times B_1^c,	
\end{equation}
where  $k\in \mathbb{R}$, $\alpha\geq -1$, $\sigma\in \mathbb{R}$ and $p>1$. Here,  $B_1$ denotes the open ball of radius $1$ centered at the origin point in $\mathbb{R}^N$ with $N\geq 2$,  $B_1^c$ denotes the complement of $B_1$ and $\cdot$ is the inner product in $\mathbb{R}^N$.  Problem \eqref{P} is investigated under two types of non-homogeneous boundary conditions: the Neumann type condition
\begin{equation}\label{BC1}
\frac{\partial u}{\partial \nu}(t,x)\geq  f(x),\quad (t,x)\in (0,\infty)\times \partial B_1
\end{equation}
and the Dirichlet-type condition
\begin{equation}\label{BC3}
u(t,x)\geq  f(x),\quad (t,x)\in (0,\infty)\times \partial B_1,
\end{equation}
where $f\in L^1(\partial B_1)$ is a nontrivial function and $\nu$ is the outward unit normal vector on $\partial B_1$, relative to $B_1^c$.  We mention below some motivations for studying the considered problems. 

The large-time behavior of solutions to the Cauchy problem of the semilinear equation
\begin{equation}\label{parabolicwholespace}
\partial_t u-\Delta u =u^p,\quad (t,x)\in (0,\infty)\times \mathbb{R}^N
\end{equation}
was first investigated by Fujita \cite{Fujita}. Namely, it was shown that \eqref{parabolicwholespace} possesses a critical behavior in the following sense:  
\begin{itemize}
\item[(a)] If $1<p<1+\frac{2}{N}$, then \eqref{parabolicwholespace} does not have any nontrivial global positive solution;
\item[(b)] If $p>1+\frac{2}{N}$, then \eqref{parabolicwholespace}  admits global positive solutions with small initial data. 
\end{itemize}
We say that $1+\frac{2}{N}$ is  the Fujita critical exponent.  Later, it was shown that $p=1+\frac{2}{N}$ belongs to  the case (a) (see \cite{HA} for $N=1,2$ and  \cite{KO} for any $N\geq 1$). 

In \cite{Fujita}, Fujita asked the following question: If in problem \eqref{parabolicwholespace} the whole space is replaced by the exterior of a bounded domain, is $1+ \frac{2}{N}$ still the critical exponent? An affirmative answer to this question  was furnished by Bandle and Levine \cite{BL} in the case of an homogeneous Dirichlet boundary condition. Later, Levine and Zhang \cite{LE} obtained the same result in the case of an homogeneous Neumann  boundary condition.

In \cite{Zheng}, Zheng and Wang investigated the large-time behavior of nonnegative solutions to a class of quasilinear parabolic equations with convection terms in exterior domains. The considered class includes as a special case  parabolic equations of the form
\begin{equation}\label{P-Zheng}
\partial_t u-\Delta u-k \frac{x}{|x|^2}\cdot \nabla u =|x|^\sigma u^p, \quad (t,x)\in (0,\infty)\times \mathbb{R}^N\backslash \overline{\omega},	
\end{equation}
where $k\in \mathbb{R}$, $\sigma\geq 0$, $p>1$ and  $\omega$ is a bounded domain in $\mathbb{R}^N$ containing the origin with a smooth boundary $\partial \omega$. Problem \eqref{P-Zheng} was studied under the homogeneous Neumann boundary condition 
\begin{equation}\label{BC1Zheng}
\frac{\partial u}{\partial \nu}(t,x)=0,\quad (t,x)\in (0,\infty)\times \partial \omega 	
\end{equation}
and the homogeneous Dirichlet boundary condition
\begin{equation}\label{BC2Zheng}
u(t,x)=0,\quad (t,x)\in (0,\infty)\times \partial \omega.	
\end{equation}
Namely, for both problems, it was shown that  
$$
p_c=\left\{\begin{array}{llll}
1+\frac{\sigma+2}{N+k}, &\mbox{if}& k>-N,\\
\infty,&\mbox{if}& k\leq -N	
\end{array}
\right.
$$
is critical in the sense of Fujita. Moreover, $p=p_c$ belongs to the blow-up case under any nontrivial initial data. 

In \cite{Zhang}, Zhang considered for the first time the exterior problem 
\begin{equation}\label{P-Zhang}
\partial_t u-\Delta u=|x|^\sigma u^p, \quad (t,x)\in (0,\infty)\times \mathbb{R}^N\backslash \overline{\omega},	
\end{equation}
where $N\geq 3$, $\sigma>-2$ and $p>1$, subject to the non-homogeneous Neumann boundary condition 
\begin{equation}\label{BC1Z}
\frac{\partial u}{\partial \nu}(t,x)= f(x),\quad (t,x)\in (0,\infty)\times \partial\textcolor{red}{\omega}
\end{equation}
or the non-homogeneous Dirichlet boundary condition 
\begin{equation}\label{BC3Z}
u(t,x)=  f(x),\quad (t,x)\in (0,\infty)\times \partial \textcolor{red}{\omega},
\end{equation}
where  $f\geq 0$ is a nontrivial $L^1(\partial \omega)$ function. An interesting phenomenon was observed in this case. Namely, it was shown that the critical exponent in the sense of Fujita  for problems \eqref{P-Zhang}-\eqref{BC1Z} and  \eqref{P-Zhang}-\eqref{BC3Z} jumps from  $1+\frac{\sigma+2}{N}$ (i.e. the critical exponent for problems  \eqref{P-Zhang}-\eqref{BC1Zheng} and  \eqref{P-Zhang}-\eqref{BC2Zheng}) to the bigger exponent $1+\frac{\sigma+2}{N-2}$.  In the case $N=2$ and $\sigma=0$,  it was shown in \cite{JS19} that  the critical exponent   for problems  \eqref{P-Zhang}-\eqref{BC1Z} and  \eqref{P-Zhang}-\eqref{BC3Z} is equal to $\infty$.  

In \cite{Sun17},  the third author considered  parabolic differential inequalities of the form 
\begin{equation}\label{P-Sun}
\partial_t u-\Delta u\geq t^\alpha |x|^{\sigma}  u^p,\quad (t,x)\in (0,\infty)\times B_1^c,	
\end{equation}
where $\alpha>-1$, $\sigma>-2$ and $N\geq 3$. Problem \eqref{P-Sun} was investigated subject to the non-homogeneous Dirichlet-type boundary condition \eqref{BC3}, where $f\in C^1(\partial B_1)$, $f\geq 0$ and $f\not\equiv 0$. Namely, for nonnegative initial data,  it was shown that, if
\begin{equation}\label{cd-Sun}
1<p<1+\frac{\sigma+2+4\alpha N}{N-2},
\end{equation}
then problem \eqref{P-Sun} subject to the boundary condition \eqref{BC3} admits no global weak solution.  Clearly, from \eqref{cd-Sun}, it was supposed implicitly that  $\sigma>-(2+4\alpha N)$. Observe that in the case of nonnegative solutions, \eqref{P-Sun} is a special case of \eqref{P} with $k=0$. 

For additional results related to differential inequalities in unbounded domains, see e.g. \cite{LA} in the case of cone-like domains and \cite{GRI14,MA,WA} in the case of manifolds.

The novelty of our work lies on: 
\begin{itemize}
\item[(1)]	the investigation of large-time behavior of sign-changing solutions to \eqref{P};
\item[(2)] the consideration of non-homogeneous boundary conditions.
\end{itemize}
Notice that in our case, the methods used in \cite{Sun17,Zheng} cannot be applied due to the change of signs of solutions.  In this paper, our approach is based on the nonlinear capacity method developed by Mitidieri and Pohozaev \cite{MP} (see also \cite{BI,CA,DA}) and the use of adequate test functions that depend on the considered boundary conditions.

Before stating our main results, let us mention in which sense the solutions are considered. 
Let 
$$
\Omega=(0,\infty)\times B_1^c\quad\mbox{and}\quad \Gamma=(0,\infty)\times\partial B_1. 
$$

For the Neumann type boundary condition, we introduce the test function space
$$
\Phi=\left\{\varphi\in C_{t,x}^{1,2}(\Omega):\, \varphi\geq 0,\, \mbox{supp}(\varphi)\subset\subset \Omega,\,  \left(\frac{\partial \varphi}{\partial \nu}+k\varphi\right)|_{\Gamma}= 0\right\}.
$$
Notice that $B_1^c$ is closed and $\Gamma\subset \Omega$. 
\begin{definition}\label{defws1}
We say that $u\in L^p_{loc}(\Omega)$ is a global weak solution to problem \eqref{P} under the boundary condition \eqref{BC1}, if 
\begin{equation}\label{wsP-BC1}
\begin{aligned}
&\int_\Omega t^\alpha |x|^{\sigma}|u|^p \varphi \,dx\,dt	+\int_\Gamma f\varphi\, dS_x\,dt\\
& \leq - \int_\Omega u \partial_t \varphi\,dx\,dt +\int_\Omega \left(-\Delta \varphi+
k\diver\left(\varphi \frac{x}{|x|^2}\right)\right)u \,dx\,dt,
\end{aligned} 
\end{equation}
 for every $\varphi\in \Phi$. 
\end{definition}

For the Dirichlet-type boundary condition, we introduce the test function space
$$
\Psi=\left\{\psi\in C_{t,x}^{1,2}(\Omega):\, \psi\geq 0,\, \mbox{supp}(\psi)\subset\subset \Omega,\, \psi|_{\Gamma}=0,\, \frac{\partial \psi}{\partial \nu}|_{\Gamma}\leq  0\right\}.
$$

\begin{definition}\label{defws3}
We say that $u\in L^p_{loc}(\Omega)$ is a global weak solution to problem \eqref{P} under the boundary condition \eqref{BC3}, if 
\begin{equation}\label{wsP-BC3}
\begin{aligned}
&\int_\Omega t^\alpha |x|^{\sigma}|u|^p \psi \,dx\,dt	-\int_\Gamma  f\frac{\partial \psi}{\partial \nu}\, dS_x\,dt\\
& \leq -\int_\Omega u \partial_t\psi\,dx\,dt +\int_\Omega \left(-\Delta \psi+k\diver\left(\psi \frac{x}{|x|^2}\right)\right)u \,dx\,dt,
\end{aligned} 
\end{equation}
 for every $\psi\in \Psi$. 
\end{definition}

For $f\in L^1(\partial B_1)$, let 
$$
I_f=\int_{\partial B_1}f(x)\,dS_x.
$$
In the case $\alpha>0$, we have the following result.

\begin{theorem}\label{T1}
Assume that $f\in L^1(\partial B_1)$ and $I_f>0$. If $\alpha>0$, then 	for all $p>1$, 
\begin{itemize}
\item[(i)] problem \eqref{P} under the boundary condition \eqref{BC1} admits no global weak solution;
\item[(ii)] problem \eqref{P} under the boundary condition \eqref{BC3} admits no global weak solution.
\end{itemize}
\end{theorem}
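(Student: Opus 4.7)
The plan is to carry out the proof by the nonlinear capacity method of Mitidieri--Pohozaev, treating the two boundary conditions in parallel with carefully designed test functions. Assuming toward contradiction that a global weak solution exists, I would, for a large parameter $T>0$, insert into the weak inequality a test function of product form
$$
\varphi(t,x)=\eta_T(t)\,\phi(|x|),
$$
where $\eta_T\in C^\infty$ equals $1$ on $[T/2,T]$, has $\operatorname{supp}(\eta_T)\subset[T/4,2T]$ and $|\eta_T'|\lesssim T^{-1}$, raised to a high power so that Young-type quotients stay integrable.

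The crux is to design the radial factor $\phi$ so as to (a)~meet the prescribed boundary condition at $|x|=1$, (b)~be nonnegative with compact support in $[1,R]$, and (c)~satisfy
$$
L\phi:=-\Delta\phi+k\,\diver\!\Bigl(\phi\,\tfrac{x}{|x|^2}\Bigr)\equiv 0
$$
in a neighborhood of $\partial B_1$, where $L$ is the operator appearing on the right of the weak formulations. Property (c) removes every boundary singularity from the capacity integrals. Seeking $\phi(r)=r^\mu$ reduces $L(r^\mu)=0$ to an Euler-type indicial equation with roots $\mu=k$ and $\mu=2-N$. For the Neumann-type condition I would take $\phi(x)=|x|^k\chi(|x|)^\ell$, with $\chi\in C_c^\infty$, $\chi(1)=1$, $\chi'(1)=0$, and $\ell$ large; then $\phi(1)=1$, $\phi'(1)=k$, so $(\partial_\nu\phi+k\phi)|_{\partial B_1}=0$ (using $\nu=-x/|x|$). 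For the Dirichlet-type condition I would take $\phi(x)=(k+N-2)^{-1}(|x|^k-|x|^{2-N})\chi(|x|)^\ell$, replaced by $(\log|x|)|x|^{2-N}\chi(|x|)^\ell$ in the resonant case $k=2-N$; then $\phi(1)=0$, $\phi'(1)=1$, $\phi\geq 0$, and $\partial_\nu\varphi|_{\Gamma}=-\eta_T(t)\leq 0$ as required.

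Inserting these test functions in \eqref{wsP-BC1} and \eqref{wsP-BC3} respectively and applying Young's inequality $|ab|\leq\varepsilon|a|^p+C_\varepsilon|b|^{p'}$ with the natural weight $w=t^\alpha|x|^\sigma\varphi$ to absorb the $|u|^p\varphi$ mass onto the left, I would arrive in both cases at
$$
\Bigl(\int_0^\infty\eta_T\,dt\Bigr)I_f\;\leq\; C\int_\Omega\frac{|\partial_t\varphi|^{p'}}{w^{1/(p-1)}}\,dx\,dt\;+\;C\int_\Omega\frac{|L\phi|^{p'}\eta_T}{w^{1/(p-1)}}\,dx\,dt.
$$
The first integral scales as $O\bigl(T^{-(1+\alpha)/(p-1)}\bigr)$ because $|\eta_T'|^{p'}\sim T^{-p'}$ on a set of size $T$ where $t\sim T$. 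In the second integral $L\phi$ is supported in a fixed annulus bounded away from $\partial B_1$, so the spatial factor is a finite constant while the time factor contributes $O\bigl(T^{1-\alpha/(p-1)}\bigr)$. The left-hand side is at least $\tfrac12 I_f\,T$, so we obtain
$$
\tfrac{1}{2}I_f\,T\;\leq\; C\bigl(T^{-(1+\alpha)/(p-1)}+T^{1-\alpha/(p-1)}\bigr).
$$
Dividing by $T$ and sending $T\to\infty$ is where the hypothesis $\alpha>0$ is used critically: both resulting exponents are strictly negative, forcing $I_f\leq 0$ and contradicting $I_f>0$.

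The main obstacle is property~(c): simultaneously realizing the prescribed boundary values and $L\phi\equiv 0$ near $\partial B_1$, since without it the capacity integrals would pick up divergent contributions from the fixed boundary. This is dispatched by the explicit homogeneous solutions $r^k$ and $r^{2-N}$ (resp.\ $(\log r)r^{2-N}$) of $L$ combined with a high-power cutoff $\chi^\ell$ whose transition region lies entirely away from $|x|=1$; the same device also absorbs the standard integrability loss coming from $|\Delta\chi^\ell|\sim\chi^{\ell-2}$ in the Young quotient.
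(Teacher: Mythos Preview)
Your proposal is correct and follows essentially the same route as the paper: product test functions $\iota(t)\,F(x)$ with the spatial factor built from the explicit $L_k$-harmonic profiles $|x|^k$ (Neumann) and $|x|^k-|x|^{2-N}$ or $|x|^{2-N}\ln|x|$ (Dirichlet) times a high-power cutoff, Young's inequality to absorb the nonlinear term, and then fixing the spatial cutoff while sending $T\to\infty$ so that the two time exponents $-\frac{\alpha+p}{p-1}$ and $-\frac{\alpha}{p-1}$ force $I_f\le 0$. The only cosmetic difference is that the paper keeps the spatial scale $R$ as an explicit parameter (needed later for Theorems~\ref{T2}--\ref{T4}) whereas you absorb it into a fixed constant, which is all that Theorem~\ref{T1} requires.
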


Now, we consider the case $-1\leq \alpha\leq 0$. We first suppose that $k>2-N$. 

\begin{theorem}\label{T2}
Assume that $f\in L^1(\partial B_1)$ and $I_f>0$. If $-1\leq \alpha\leq 0$,  	$\sigma+2(\alpha+1)>0$ and $k>2-N$, then for all 
\begin{equation}\label{blowup-cd}
1<p<1+\frac{\sigma+2(\alpha+1)}{N+k-2},	
\end{equation}
\begin{itemize}
\item[(i)] problem \eqref{P} under the boundary condition \eqref{BC1} admits no global weak solution;
\item[(ii)] problem \eqref{P} under the boundary condition \eqref{BC3} admits no global weak solution.
\end{itemize}
\end{theorem}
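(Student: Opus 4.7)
The plan is to use the nonlinear capacity (test-function) method of Mitidieri--Pohozaev, with test functions crafted so that (a) the homogeneous boundary conditions defining $\Phi$ and $\Psi$ hold, and (b) the non-homogeneous datum $f$ produces a positive contribution on the left of the weak formulation. The preliminary observation is that, on radial power functions $r^{\beta}$, the spatial adjoint $L_x^{\ast}\xi:=-\Delta\xi+k\,\diver(\xi\,x/|x|^{2})$ reduces to the indicial equation $\beta^{2}+(N-2-k)\beta-k(N-2)=0$, with roots $\beta=k$ and $\beta=2-N$; these are the two $L_x^{\ast}$-harmonic radial profiles I will exploit.

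Fix a large integer $\ell$, a cut-off $\chi\in C^{\infty}([0,\infty))$ with $\chi\equiv1$ on $[0,1]$ and $\chi\equiv0$ on $[2,\infty)$, and a time cut-off $\eta\in C^{\infty}([0,\infty))$ supported in $[1,2]$. For large $T,R>0$ I would take
\[
\varphi_{T,R}(t,x)=\eta(t/T)^{\ell}\,\xi_{0}(|x|)\,\chi(|x|/R)^{\ell},
\]
with $\xi_{0}(r)=r^{k}$ in case (i) and $\xi_{0}(r)=r^{k}-r^{2-N}$ in case (ii). A direct computation shows $\varphi_{T,R}\in\Phi$ in (i), since $\xi_{0}'(1)=k=k\,\xi_{0}(1)$; and $\varphi_{T,R}\in\Psi$ in (ii), since $\xi_{0}(1)=0$, $\xi_{0}'(1)=N+k-2>0$ (using $k>2-N$), and $\xi_{0}>0$ on $(1,\infty)$. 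Inserting $\varphi_{T,R}$ into \eqref{wsP-BC1}, resp.\ \eqref{wsP-BC3}, the boundary integrals reduce to a positive multiple of $I_{f}\int_{0}^{\infty}\eta(t/T)^{\ell}\,dt\ge c\,I_{f}\,T$ with $c>0$ independent of $T,R$.

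On the right-hand side I would apply Young's inequality to $u\cdot(-\partial_{t}\varphi_{T,R}+L_x^{\ast}\varphi_{T,R})$ to absorb half of $\int_{\Omega}t^{\alpha}|x|^{\sigma}|u|^{p}\varphi_{T,R}$ on the left, leaving
\[
c\,I_{f}\,T\le C\,\mathcal A(\varphi_{T,R}):=C\int_{\Omega}t^{-\alpha/(p-1)}|x|^{-\sigma/(p-1)}\varphi_{T,R}^{-1/(p-1)}\bigl|{-\partial_{t}\varphi_{T,R}+L_x^{\ast}\varphi_{T,R}}\bigr|^{p/(p-1)}dx\,dt.
\]
The crucial point is that $L_x^{\ast}\xi_{0}\equiv0$, so $L_x^{\ast}\varphi_{T,R}$ contains only cross-terms where derivatives fall on $\chi(|x|/R)^{\ell}$; these are supported on the annulus $R\le|x|\le2R$ with size $O(R^{k-2})$. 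The time-derivative term is supported where $\eta'(t/T)\neq0$ with size $O(R^{k}/T)$ on $|x|\lesssim R$. Standard scaling then yields
\[
\mathcal A(\varphi_{T,R})\le C\,T^{-(1+\alpha)/(p-1)}R^{\,N+k-\sigma/(p-1)}+C\,T^{\,1-\alpha/(p-1)}R^{\,N+k-\sigma/(p-1)-2p/(p-1)}.
\]
The parabolic choice $T=R^{2}$ collapses both summands to the common scale $C\,R^{\,N+k-(\sigma+2(\alpha+1))/(p-1)}$, so dividing by $T=R^{2}$ gives $c\,I_{f}\le C\,R^{\,N+k-2-(\sigma+2(\alpha+1))/(p-1)}$. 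The subcritical condition \eqref{blowup-cd} is exactly the negativity of this exponent, so letting $R\to\infty$ contradicts $I_{f}>0$.

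The main technical task is the capacity bookkeeping on the spatial annulus: one must verify that all cross-terms $\nabla\xi_{0}\cdot\nabla\chi(|x|/R)^{\ell}$, $\xi_{0}\Delta\chi(|x|/R)^{\ell}$, and $\xi_{0}\nabla\chi(|x|/R)^{\ell}\cdot x/|x|^{2}$ combine to size $O(R^{k-2})$, and that $\ell$ is chosen large enough to keep the residual power $\chi^{(\ell(p-1)-2p)/(p-1)}$ bounded. The assumption $\sigma+2(\alpha+1)>0$ enters only to guarantee that \eqref{blowup-cd} defines a non-empty subcritical range; the rest of the argument is indifferent to the sign of $\alpha$ in $[-1,0]$, which is why the same parabolic scaling $T=R^{2}$ handles the whole range uniformly.
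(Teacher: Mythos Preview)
Your proposal is correct and follows essentially the same route as the paper: the same $L_k$-harmonic radial profiles $r^{k}$ and $r^{k}-r^{2-N}$, the same cut-off architecture with Young's inequality absorbing the nonlinearity, and the parabolic scaling $T=R^{2}$ to equalize the time and space capacity contributions. The only bookkeeping detail to tighten is that your bound $R^{N+k-\sigma/(p-1)}$ for the spatial integral in the time-derivative term tacitly assumes this exponent is positive; the paper writes $\ln R+R^{N+k-\sigma/(p-1)}$ to cover all signs, and you should note that the resulting extra contribution $R^{-2(\alpha+p)/(p-1)}\ln R$ (after scaling and dividing by $T$) still vanishes as $R\to\infty$ since $\alpha+p>0$.
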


Next, we suppose that $k\leq 2-N$. 

\begin{theorem}\label{T3}
Assume that $f\in L^1(\partial B_1)$ and $I_f>0$. If $-1\leq \alpha\leq 0$ and $k\leq 2-N$, then for all  $p>1$, 
\begin{itemize}
\item[(i)] problem \eqref{P} under the boundary condition \eqref{BC1} admits no global weak solution for $\sigma+2(\alpha+1)>2-N-k$; 
\item[(ii)] problem \eqref{P} under the boundary condition \eqref{BC3} admits no global weak solution for  	$\sigma+2(\alpha+1)>0$. 
\end{itemize}
\end{theorem}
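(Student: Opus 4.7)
\emph{Overview.} I would prove Theorem~\ref{T3} by the nonlinear capacity method of Mitidieri--Pohozaev \cite{MP}, using radial test-function profiles adapted to the formal adjoint operator $L^*\varphi = -\Delta\varphi + k\,\mathrm{div}(\varphi\,x/|x|^2)$. A direct ODE computation shows that, in the radial class, the null-solutions of $L^*$ are spanned by $r^{\,2-N}$ and $r^{k}$ (with $r^{\,2-N}\log r$ replacing one of them in the degenerate case $k=2-N$). For case (i) (Neumann) I pick $h(r)=r^{k}$, so that $h'(1)=k\,h(1)$ and the admissibility condition $(\partial_\nu\varphi+k\varphi)\big|_\Gamma=0$ defining $\Phi$ holds automatically. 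For case (ii) (Dirichlet) I pick $h(r)=r^{\,2-N}-r^{k}$ (or the log variant when $k=2-N$); because $k\le 2-N$, this $h$ satisfies $h(1)=0$, $h\ge 0$ on $[1,\infty)$, and $h'(1)=(2-N)-k\ge 0$, so $\psi\ge 0$, $\psi|_\Gamma=0$, and $\partial_\nu\psi|_\Gamma\le 0$, as required by $\Psi$.

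\emph{Test function and boundary contribution.} Fix cut-offs $\eta,\xi\in C_c^\infty([0,2))$ with $\eta\equiv\xi\equiv 1$ on $[0,1]$, an integer $\ell\ge 2p/(p-1)$, and large parameters $T,R>0$. I would use
\[
\varphi_{T,R}(t,x)=h(|x|)\bigl[\eta(t/T)\,\xi(|x|/R)\bigr]^{\ell},
\]
which lies in $\Phi$ (respectively $\Psi$). Substituting in \eqref{wsP-BC1} (respectively \eqref{wsP-BC3}) yields the boundary integral $h(1)\,I_f\int_0^{2T}\eta(t/T)^{\ell}\,dt\sim c\,T$ in case (i), and $h'(1)\,I_f\int_0^{2T}\eta(t/T)^{\ell}\,dt\sim c\,T$ in case (ii), with $c>0$ thanks to $I_f>0$. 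This furnishes a positive lower bound of order $T$ on the left-hand side.

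\emph{Capacity estimates and scaling.} The key cancellation $L^*h=0$ forces $L^*\varphi_{T,R}$ to be supported on the annulus $\{R\le|x|\le 2R\}$, where only derivatives of $\xi$ survive. Applying Young's inequality $ab\le\varepsilon a^{p}+C_\varepsilon b^{\,p/(p-1)}$ to each of $-\int u\,\partial_t\varphi_{T,R}$ and $\int L^*\varphi_{T,R}\cdot u$, and absorbing the $\varepsilon$-part of the nonlinearity on the left, I reduce the weak inequality to
\[
c\,T\;\le\;C\,\bigl(A_1(T,R)+A_2(T,R)\bigr),
\]
where schematically
\[
A_1\lesssim T^{-(1+\alpha)/(p-1)}\!\int_1^{2R}h(r)\,r^{\,N-1-\sigma/(p-1)}\,dr,\qquad A_2\lesssim T^{\,1-\alpha/(p-1)}\,h(R)\,R^{\,N-(2p+\sigma)/(p-1)}.
\]
On the annulus one has $h(R)\sim R^{k}$ in case (i) and $h(R)\sim R^{\,2-N}$ in case (ii). Setting $T=R^{\theta}$, the two requirements $A_i/T\to 0$ as $R\to\infty$ become a pair of linear inequalities in $\theta$ whose simultaneous solvability is equivalent precisely to $\sigma+2(\alpha+1)>2-N-k$ in case (i) and to $\sigma+2(\alpha+1)>0$ in case (ii). Choosing such a $\theta$ and sending $R\to\infty$ forces $c\,T\le o(T)$, a contradiction.

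\emph{Main obstacle.} The delicate part is the final scaling step: one must carefully compute the exponents of $R$ and $T$ that come out of $A_1,A_2$ and check that the admissible interval for $\theta$ is nonempty under the stated hypothesis. The gap between the two thresholds in (i) and (ii) traces back to the different annular decay of $h$ (namely $R^{k}$ versus $R^{\,2-N}$), which is why the Dirichlet case enjoys the weaker condition $\sigma+2(\alpha+1)>0$. Additional care is needed in the endpoint $\alpha=-1$, where the weight $t^{\alpha}$ sits at the border of integrability, and in the degenerate subcase $k=2-N$, where the profile $r^{\,2-N}\log r$ introduces log factors that do not alter the scaling but must be tracked explicitly.
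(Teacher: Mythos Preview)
Your approach is essentially that of the paper: the same radial profiles $r^{k}$ (Neumann) and $r^{2-N}-r^{k}$ or $r^{2-N}\ln r$ (Dirichlet), the same Young-inequality reduction to two capacity integrals, and the same scaling $T=R^{\theta}$; the paper simply takes $\theta=2$, which makes the $R$-exponents arising from the $\partial_t$-term and the $L_k$-term coincide so that a single sign check (namely $(k+N-2)p-(\sigma+k+N+2\alpha)<0$ in~(i) and $-(\sigma+2(\alpha+1))<0$ in~(ii)) finishes the argument. One small correction: your time cut-off $\eta\equiv 1$ on $[0,1]$ means $\varphi_{T,R}$ does not vanish near $t=0$ and hence lies outside the test spaces $\Phi,\Psi$ (which require $\operatorname{supp}\varphi\subset\subset(0,\infty)\times B_1^c$); the paper handles this by taking its temporal profile $\zeta$ with $\operatorname{supp}(\zeta)\subset\subset(0,1)$, which simultaneously disposes of the $\alpha=-1$ integrability concern you flag.
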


\begin{remark}
Let us consider problem \eqref{P-Sun}  under the Dirichlet-type boundary condition \eqref{BC3}, where $f\in C^1(\partial B_1)$, $f\geq 0$ and $f\not\equiv 0$. As we mentioned in Section \ref{sec1}, it was shown in \cite{Sun17} that for nonnegative initial data, if 
\begin{equation}\label{cds-Sun17}
\alpha>-1,\,\, \sigma>\max\left\{-2, -(2+4\alpha N)\right\},\,\, N\geq 3,\,\,1<p<1+\frac{\sigma+2+4\alpha N}{N-2},
\end{equation}
then problem \eqref{P-Sun} subject to the boundary condition \eqref{BC3} admits no global weak solution. On the other hand, by Theorems \ref{T1}, \ref{T2} and \ref{T3}, we deduce that, if  $f\in L^1(\partial B_1)$, $I_f>0$ and
\begin{equation}\label{cds-JSS}
\left\{\begin{array}{llll}
\alpha>0,\,\,  p>1 \\
\mbox{ or}\\
-1\leq \alpha\leq 0,\,\, \sigma>-2(\alpha+1),\,\, N=2,\,\, p>1\\
\mbox{ or}\\
-1\leq \alpha\leq 0,\,\, \sigma>-2(\alpha+1),\,\, N\geq 3,\,\,1<p<1+\displaystyle\frac{\sigma+2(\alpha+1)}{N-2},
\end{array}
\right.
\end{equation}
 then problem \eqref{P-Sun} subject to the boundary condition \eqref{BC3} admits no global weak solution. Comparing \eqref{cds-Sun17} with \eqref{cds-JSS}, we see that our result improves that obtained  in \cite{Sun17}. Notice also that no condition on the initial data is imposed in our results and the  considered class of functions $f$  is more large than that in \cite{Sun17}. 
 \end{remark}

Finally, we consider the critical case when $\alpha=0$ and $k>2-N$. 

\begin{theorem}\label{T4}
Assume that $f\in L^1(\partial B_1)$ and $I_f>0$. If $\alpha=0$,  	$\sigma>-2$ and $k>2-N$, then for 
\begin{equation}\label{blowup-cd-cr}
p=1+\frac{\sigma+2}{N+k-2},	
\end{equation}
\begin{itemize}
\item[(i)] problem \eqref{P} under the boundary condition \eqref{BC1} admits no global weak solution;
\item[(ii)] problem \eqref{P} under the boundary condition \eqref{BC3} admits no global weak solution.
\end{itemize}
\end{theorem}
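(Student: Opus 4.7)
The plan is to push the nonlinear capacity method underlying Theorem \ref{T2} to the critical exponent $p_c := 1+\frac{\sigma+2}{N+k-2}$, where the ambient scaling is exactly balanced, and then to close the remaining gap with a Hölder-based refinement localized on the support of the cutoff derivatives.

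Assume for contradiction that a global weak solution $u$ exists. With parabolic scaling $T = R^2$, insert into \eqref{wsP-BC1} (resp.\ \eqref{wsP-BC3}) the test functions
$$
\varphi_R(t,x) = \eta(t/R^2)^\ell\,\xi(|x|/R)^\ell\,|x|^k,\qquad \psi_R(t,x) = \eta(t/R^2)^\ell\,\xi(|x|/R)^\ell\,\bigl(|x|^k-|x|^{2-N}\bigr),
$$
where $\eta\in C^\infty([0,\infty);[0,1])$ is supported in $[1/4,2]$, $\xi\in C^\infty([0,\infty);[0,1])$ equals $1$ on $[0,1]$ and $0$ on $[2,\infty)$, and $\ell\gg 1$. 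A direct calculation using $k>2-N$ shows $\varphi_R\in\Phi$ and $\psi_R\in\Psi$. The key property is that $|x|^k$ and $|x|^k-|x|^{2-N}$ both lie in the kernel of the elliptic adjoint $\varphi\mapsto -\Delta\varphi+k\,\diver(\varphi\,x/|x|^2)$ on $|x|>1$, so $-\Delta\Phi_R+k\,\diver(\Phi_R\,x/|x|^2)$ is supported in the spatial annulus $\{R\leq|x|\leq 2R\}$, while $\partial_t\Phi_R$ is supported in the temporal shell where $\eta'(t/R^2)\neq 0$. Here $\Phi_R\in\{\varphi_R,\psi_R\}$ and $\mathcal{A}_R$ denotes the union of these two shells.

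The scaling exponent $\theta(p):=(N+k)-(\sigma+2)/(p-1)$ evaluates to $\theta(p_c)=2$, exactly matching the scaling of the boundary contribution $R^2 I_f$. Applying Hölder's inequality to the right-hand side of the weak formulation, with the integration localized to $\mathcal{A}_R$, yields
$$
c_0\,R^2\,I_f + \int_\Omega |x|^\sigma|u|^p\,\Phi_R\,dx\,dt \;\leq\; C\,R^{2/p'}\,\Bigl(\int_{\mathcal{A}_R}|x|^\sigma|u|^p\,dx\,dt\Bigr)^{1/p}.
$$
The Young variant of this estimate, after standard absorption, gives $\int_\Omega |x|^\sigma|u|^p\,\Phi_R\,dx\,dt\leq C R^2$. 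The goal is now to upgrade this to the global integrability
$$
\mathcal I := \int_\Omega |x|^\sigma|u|^p\,dx\,dt<\infty,
$$
which, once in hand, forces $\int_{\mathcal{A}_R}|x|^\sigma|u|^p\,dx\,dt\to 0$ as $R\to\infty$ (by absolute continuity, since $\mathcal{A}_R$ escapes to infinity). Dropping the non-negative second term on the left of the displayed inequality then yields
$$
c_0\,R^{2/p}\,I_f \;\leq\; C\,\Bigl(\int_{\mathcal{A}_R}|x|^\sigma|u|^p\,dx\,dt\Bigr)^{1/p}\;\longrightarrow\;0,
$$
incompatible with $R^{2/p}\to\infty$ and $I_f>0$.

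The main obstacle is establishing $\mathcal I<\infty$ at $p=p_c$. In the subcritical regime of Theorem \ref{T2}, the right-hand side of the Young-based estimate tends to zero and the conclusion is immediate; here, both sides scale as $R^2$ and one must exploit the interplay between the Young upper bound $\int_\Omega|x|^\sigma|u|^p\,\Phi_R\lesssim R^2$ and the Hölder lower bound $\int_{\mathcal{A}_R}|x|^\sigma|u|^p\,dx\,dt\geq c R^2 I_f^p$, iterated over dyadic scales $R=2^n$ using the essential disjointness of the annuli $\mathcal{A}_{2^n}$. This dyadic summation, together with careful bookkeeping of the explicit radial profiles $|x|^k$ and $|x|^k-|x|^{2-N}$ and the choice of cutoff exponent $\ell$, is the technical heart of the critical case; the remaining steps (verifying the test-function admissibility, performing the scaling, and passing to the limit) are a routine adaptation of the subcritical proof.
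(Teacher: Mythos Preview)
Your approach diverges from the paper's in a crucial way, and the divergence is precisely where your argument breaks down. The paper does \emph{not} try to establish the global integrability $\mathcal{I}=\int_\Omega|x|^\sigma|u|^p\,dx\,dt<\infty$ at all. Instead it replaces the standard cutoff $\xi(|x|/R)$ by a \emph{logarithmic} cutoff $\eta_R$ satisfying $\eta_R\equiv 1$ on $B_{\sqrt R}$, $\mathrm{supp}\,\eta_R\subset B_R$, and $|\nabla\eta_R|\leq C/(|x|\ln R)$, $|\Delta\eta_R|\leq C/(|x|^2\ln R)$. This extra $(\ln R)^{-1}$ in the derivatives propagates to give $I_2(\varphi^*)\leq CT(\ln R)^{-1/(p-1)}$ at the critical exponent (Lemmas~\ref{L2.8} and~\ref{L2.11}), so the a~priori estimate becomes
\[
I_f\leq C\Bigl(T^{-\frac{p}{p-1}}\bigl(\ln R+R^{N+k-\frac{\sigma}{p-1}}\bigr)+(\ln R)^{-\frac{1}{p-1}}\Bigr),
\]
and taking $T=R^\theta$ with $\theta$ large sends the right-hand side to~$0$. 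No H\"older refinement, no dyadic summation, no global integrability is needed.

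Your route, by contrast, is blocked at the very step you identify as ``the technical heart''. With the standard cutoff $\xi(|x|/R)$ and $T=R^2$, the Young estimate yields only $\int_\Omega|x|^\sigma|u|^p\Phi_R\lesssim R^2$, which gives no control on $\mathcal{I}$ because your time cutoff $\eta(t/R^2)^\ell$ is a bump supported in $[R^2/4,\,2R^2]$: as $R\to\infty$ the support escapes to infinity and $\Phi_R\to 0$ pointwise, so no Fatou-type argument can extract a finite limit integral. Worse, your ``H\"older lower bound'' $\int_{\mathcal{A}_R}|x|^\sigma|u|^p\gtrsim R^2 I_f^p$, summed over essentially disjoint dyadic $\mathcal{A}_{2^n}$, gives $\mathcal{I}\gtrsim\sum_n 4^n=\infty$, the \emph{opposite} of what you need. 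There is no interplay between these two bounds that produces $\mathcal{I}<\infty$; the upper bound $Q_R\lesssim R^2$ and the lower bound $J_R\gtrsim R^2$ are simply consistent with each other and with $\mathcal{I}=\infty$. In short: with the standard cutoff the spatial error $I_2$ scales exactly like the boundary term $TI_f$, and no amount of H\"older or dyadic bookkeeping breaks that balance. You need the logarithmic cutoff (or an equivalent device that introduces a vanishing factor into $I_2$) to handle $p=p_c$.
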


\begin{remark}
At this time, for $-1\leq \alpha<0$,  	$\sigma+2(\alpha+1)>0$ and $k>2-N$,	we do not know whether condition \eqref{blowup-cd} is sharp or not. However, for $\alpha=0$, $\sigma>-2$ and $k>2-N$, the exponent $1+\frac{\sigma+2}{N+k-2}$ is the Fujita critical exponent for problems \eqref{P}-\eqref{BC1} and \eqref{P}-\eqref{BC3}.  Namely, if $p>1+\frac{\sigma+2}{N+k-2}$, taking $u(x)=\varepsilon |x|^\delta$, where 
$$
2-N-k<\delta<-\frac{\sigma+2}{p-1}<0
$$
and 
$$
0<\varepsilon< \left[\delta(2-N-k-\delta)\right]^{\frac{1}{p-1}},
$$
we can check that $u$ is a stationary solution to problems \eqref{P}-\eqref{BC1} and \eqref{P}-\eqref{BC3} for suitable $f\geq 0$. 
\end{remark}

\begin{remark}
It is interesting to observe that in the case $\alpha=0$, $\sigma>-2$ and $k>2-N$, the critical exponent for problems \eqref{P}-\eqref{BC1} and \eqref{P}-\eqref{BC3} jumps from $p_c=1+\frac{\sigma+2}{N+k}	$ (i.e. the critical exponent for \eqref{P-Zheng}-\eqref{BC1Zheng} and \eqref{P-Zheng}-\eqref{BC2Zheng}) to the bigger exponent $1+\frac{\sigma+2}{N+k-2}$.
\end{remark}

The rest of the paper is organized as follows. Section \ref{sec2} is devoted to some preliminary estimates. Namely, we first establish a priori estimates for global weak solutions to problems \eqref{P}-\eqref{BC1} and \eqref{P}-\eqref{BC3}. Next, according to the considered boundary conditions, some test functions are introduced and some useful estimates are established.   Finally, we prove our main results in Section \ref{sec3}. 

 Throughout this paper,  the letter $C$  denotes always a generic positive constant whose value is unimportant and may vary at different occurrences. For $\delta>0$, we denote by $B_\delta$ the open ball of radius $\delta$ centered at the origin point in $\mathbb{R}^N$.  For $0<\delta_1<\delta_2$, we denote by $C(\delta_1,\delta_2)$ the subset of $\mathbb{R}^N$ defined by 
$$
C(\delta_1,\delta_2)=\left\{x\in \mathbb{R}^N:\, \delta_1<|x|<\delta_2\right\}.
$$

\section{Preliminaries} \label{sec2}

\subsection{A priori estimates}\label{s2.1}

The following a priori estimates will play a crucial role in the proofs of our main results. 

For $k\in \mathbb{R}$,  let $L_k$ be the differential operator defined by 
\begin{equation}\label{DF-LK}
L_k v=\Delta v -k\diver\left(\frac{v}{|x|^2}x\right).
\end{equation}
For $\mu\in \Phi\cup \Psi$, let 
\begin{equation}\label{I1phi}
I_1(\mu)=\int_\Omega t^{\frac{-\alpha}{p-1}}|x|^{\frac{-\sigma}{p-1}}\mu^{\frac{-1}{p-1}}|\partial_t\mu|^{\frac{p}{p-1}}\,dx\,dt
\end{equation}
and
\begin{equation}\label{I2phi}
I_2(\mu)=\int_\Omega t^{\frac{-\alpha}{p-1}}|x|^{\frac{-\sigma }{p-1}} \mu^{\frac{-1}{p-1}} |L_k\mu|^{\frac{p}{p-1}}\,dx\,dt.
\end{equation}

\begin{proposition}\label{PR1}
Assume that $u\in L^p_{loc}(\Omega)$ is a global weak solution to problem \eqref{P} under the boundary condition \eqref{BC1}. 	Then
\begin{equation}\label{apest1}
\int_\Gamma  f\varphi\, dS_x\,dt\leq C \sum_{i=1}^2 I_i(\varphi),	
\end{equation}
for every $\varphi\in \Phi$,  provided that  $I_i(\varphi)<\infty$, $i=1,2$.  
\end{proposition}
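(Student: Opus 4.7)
The plan is to derive the a priori estimate directly from the weak formulation \eqref{wsP-BC1} by applying Young's inequality twice with a judicious splitting of the weights.

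First, I would rewrite the right-hand side of \eqref{wsP-BC1} in terms of the operator $L_k$ defined in \eqref{DF-LK}. Since $-\Delta \varphi + k\,\diver\!\left(\varphi \frac{x}{|x|^2}\right) = -L_k \varphi$, the inequality \eqref{wsP-BC1} becomes
\begin{equation*}
\int_\Omega t^\alpha |x|^{\sigma}|u|^p \varphi \,dx\,dt + \int_\Gamma f\varphi\, dS_x\,dt \leq -\int_\Omega u\, \partial_t \varphi\,dx\,dt - \int_\Omega u\, L_k \varphi\,dx\,dt.
\end{equation*}
Bounding $u$ by $|u|$ on the right-hand side gives two terms of the same structure: $\int_\Omega |u|\,|\partial_t \varphi|\,dx\,dt$ and $\int_\Omega |u|\,|L_k \varphi|\,dx\,dt$.

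Next, I would apply Young's inequality with conjugate exponents $p$ and $p/(p-1)$ to each term, splitting the integrand so that the ``large'' factor carries the weight $t^\alpha|x|^\sigma \varphi$. Namely, writing
\begin{equation*}
|u|\,|\partial_t \varphi| = \bigl(t^\alpha|x|^\sigma \varphi\bigr)^{1/p}|u| \cdot \bigl(t^\alpha|x|^\sigma \varphi\bigr)^{-1/p}|\partial_t \varphi|,
\end{equation*}
Young's inequality with parameter $\varepsilon>0$ yields
\begin{equation*}
|u|\,|\partial_t \varphi| \leq \varepsilon\, t^\alpha|x|^\sigma \varphi\, |u|^p + C_\varepsilon\, t^{-\alpha/(p-1)}|x|^{-\sigma/(p-1)} \varphi^{-1/(p-1)} |\partial_t \varphi|^{p/(p-1)},
\end{equation*}
and an identical estimate holds with $|L_k \varphi|$ in place of $|\partial_t \varphi|$. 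Integrating over $\Omega$, the second term in each case is exactly $C_\varepsilon I_1(\varphi)$ or $C_\varepsilon I_2(\varphi)$.

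Finally, I would choose $\varepsilon$ small (say $\varepsilon = 1/4$) so that after summing the two Young estimates, the coefficient of $\int_\Omega t^\alpha|x|^\sigma |u|^p \varphi \,dx\,dt$ on the right is at most $1/2$. Under the hypothesis $I_1(\varphi), I_2(\varphi) < \infty$, this term is finite and may be absorbed into the left-hand side, leaving
\begin{equation*}
\tfrac{1}{2}\int_\Omega t^\alpha|x|^\sigma |u|^p \varphi\,dx\,dt + \int_\Gamma f\varphi\,dS_x\,dt \leq C\bigl(I_1(\varphi)+I_2(\varphi)\bigr),
\end{equation*}
and dropping the nonnegative bulk integral on the left gives \eqref{apest1}. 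The main (but minor) technical point is correctly distributing the weights $t^\alpha|x|^\sigma\varphi$ between the two factors in the Young inequality so that the residual term exactly matches the definitions of $I_1(\varphi)$ and $I_2(\varphi)$ in \eqref{I1phi}--\eqref{I2phi}; there is no real obstacle, as the conditions $\varphi \in \Phi$ and $\varphi \geq 0$ make each step legitimate.
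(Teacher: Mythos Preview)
Your proposal is correct and follows essentially the same approach as the paper: start from the weak formulation, bound the right-hand side by $|u|$-terms, apply $\varepsilon$-Young's inequality with the weight $t^\alpha|x|^\sigma\varphi$ distributed so that the residual terms are exactly $I_1(\varphi)$ and $I_2(\varphi)$, and then eliminate the bulk $|u|^p$-integral. The only cosmetic difference is that the paper takes $\varepsilon=\tfrac12$ so the bulk term drops out with coefficient $1-2\varepsilon=0$, while you take $\varepsilon=\tfrac14$ and then discard the remaining nonnegative term; note that the finiteness of $\int_\Omega t^\alpha|x|^\sigma|u|^p\varphi$ needed for your absorption step comes from $u\in L^p_{loc}(\Omega)$ and the compact support of $\varphi$, not from the finiteness of $I_1,I_2$.
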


\begin{proof}
By \eqref{wsP-BC1}, we have
\begin{equation}\label{gestP1}
\int_\Omega t^\alpha |x|^{\sigma}|u|^p \varphi \,dx\,dt	+\int_\Gamma  f\varphi\, dS_x\,dt
 \leq  \int_\Omega |u| |\partial_t\varphi|\,dx\,dt +\int_\Omega |L_k\varphi| |u| \,dx\,dt,
\end{equation}
 for every $\varphi\in \Phi$. On the other hand, using $\varepsilon$-Young inequality with $\varepsilon>0$, we get
 \begin{equation}\label{est1P1}
 \int_\Omega |u| |\partial_t\varphi|\,dx\,dt \leq \varepsilon \int_\Omega t^\alpha |x|^{\sigma}|u|^p \varphi \,dx\,dt	+C I_1(\varphi)	
 \end{equation}
and
\begin{equation}\label{est2P1}
\int_\Omega |L_k\varphi| |u| \,dx\,dt\leq \varepsilon \int_\Omega t^\alpha |x|^{\sigma}|u|^p \varphi \,dx\,dt+C I_2(\varphi).		
\end{equation}
In view of \eqref{gestP1}, \eqref{est1P1} and \eqref{est2P1}, there holds
$$
(1-2\varepsilon) \int_\Omega t^\alpha |x|^{\sigma}|u|^p \varphi \,dx\,dt+\int_\Gamma  f\varphi\, dS_x\,dt\leq C \sum_{i=1}^2 I_i(\varphi).
$$
Taking $\varepsilon=\frac{1}{2}$ in the above inequality, we obtain \eqref{apest1}.
\end{proof}

 Following the same argument used in the proof of Proposition \ref{PR1}, we obtain the following a priori estimate for problem \eqref{P} under the boundary condition \eqref{BC3}. 

\begin{proposition}\label{PR2}
Assume that $u\in L^p_{loc}(\Omega)$ is a global weak solution to problem \eqref{P} under the boundary condition \eqref{BC3}. 	Then
\begin{equation}\label{apest2}
-\int_\Gamma  f \frac{\partial \psi}{\partial \nu}\, dS_x\,dt\leq C \sum_{i=1}^2 I_i(\psi),
\end{equation}
for every $\psi\in \Psi$,  provided that  $I_i(\psi)<\infty$, $i=1,2$.  
\end{proposition}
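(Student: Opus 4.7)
The plan is to mirror the proof of Proposition \ref{PR1} essentially verbatim, with the only change coming from the different boundary behaviour built into the test-function space $\Psi$.

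First, I would invoke the weak formulation \eqref{wsP-BC3} and the elementary bounds $u\,\partial_t\psi \leq |u|\,|\partial_t\psi|$ and $(-L_k\psi)\,u \leq |L_k\psi|\,|u|$ (applied to the two integrals on the right-hand side), to obtain, for every $\psi \in \Psi$,
\[
\int_\Omega t^\alpha |x|^\sigma |u|^p \psi \, dx\, dt \;-\; \int_\Gamma f \frac{\partial \psi}{\partial \nu} \, dS_x\, dt \;\leq\; \int_\Omega |u|\, |\partial_t \psi|\, dx\, dt \;+\; \int_\Omega |L_k \psi|\, |u|\, dx\, dt,
\]
with $L_k$ as in \eqref{DF-LK}. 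The boundary integral $-\int_\Gamma f (\partial \psi/\partial \nu)\, dS_x\, dt$ is kept on the left exactly as it appears in the statement \eqref{apest2}; note that since $f$ is merely $L^1$, this term need not be individually non-negative, but that plays no role in the argument.

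Next, I would apply the $\varepsilon$-Young inequality to the two integrals on the right-hand side, exactly as in \eqref{est1P1}--\eqref{est2P1}, to get
\[
\int_\Omega |u|\,|\partial_t \psi|\, dx\, dt \;\leq\; \varepsilon \int_\Omega t^\alpha |x|^\sigma |u|^p \psi\, dx\, dt \;+\; C\, I_1(\psi)
\]
and the analogous estimate with $\partial_t\psi$ replaced by $L_k\psi$ on the left and $I_2(\psi)$ on the right. Substituting these two bounds into the previous inequality yields
\[
(1-2\varepsilon) \int_\Omega t^\alpha |x|^\sigma |u|^p \psi \, dx\, dt \;-\; \int_\Gamma f \frac{\partial \psi}{\partial \nu} \, dS_x\, dt \;\leq\; C\bigl(I_1(\psi) + I_2(\psi)\bigr).
\]
Choosing $\varepsilon = 1/2$ kills the first term on the left (it being non-negative, dropping it only weakens the inequality), and \eqref{apest2} follows.

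Since the structure is identical to that of Proposition \ref{PR1}, I do not foresee any real obstacle. The only point worth flagging is that the boundary term in \eqref{wsP-BC3} is $-\int_\Gamma f (\partial \psi/\partial \nu)\, dS_x\, dt$, rather than $+\int_\Gamma f\varphi\, dS_x\, dt$ as in the Neumann case; apart from this bookkeeping, the two arguments coincide. In particular, the test functions $\psi \in \Psi$ and $\varphi \in \Phi$ share the same interior regularity and compact-support conditions, so the $\varepsilon$-Young step and the bookkeeping involving $I_1$ and $I_2$ transfer unchanged.
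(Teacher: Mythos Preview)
Your proposal is correct and is exactly the approach the paper has in mind: the paper does not spell out a separate proof for Proposition~\ref{PR2} but simply notes that the argument of Proposition~\ref{PR1} carries over verbatim, with the Neumann boundary term $\int_\Gamma f\varphi\,dS_x\,dt$ replaced by the Dirichlet boundary term $-\int_\Gamma f\,\partial_\nu\psi\,dS_x\,dt$. Your write-up reproduces precisely that argument.
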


\subsection{Test functions}
Let  $\mathcal{N}$ be the function defined by
$$
\mathcal{N}(x)=|x|^{k},\quad x\in B_1^c.
$$
One can check easily that the function $\mathcal{N}$ satisfies
\begin{equation}\label{LKN}
\left\{\begin{array}{llll}
L_k \mathcal{N}= 0 &\mbox{ in }& B_1^c,\\ 
\displaystyle\frac{\partial \mathcal{N}}{\partial \nu}+k\mathcal{N} = 0 	&\mbox{ on }& \partial B_1,
 \end{array}
\right.
\end{equation}
where $L_k$ is the differential operator defined by \eqref{DF-LK}.  

Let $\mathcal{D}$ be the function defined in $B_1^c$ by 
$$
\mathcal{D}(x)=\left\{\begin{array}{llll}
|x|^k\left(1-|x|^{2-N-k}\right), &\mbox{if}& k>2-N,\\ 
|x|^{2-N}\left(1-|x|^{k-2+N}\right), &\mbox{if}& k<2-N,\\ 
|x|^{2-N}\ln |x|, &\mbox{if}& k=2-N.
\end{array}
\right.
$$
It can be easily seen that the function $\mathcal{D}$ is nonnegative and satisfies
\begin{equation}\label{LKD}
\left\{\begin{array}{llll}
L_k \mathcal{D} = 0 &\mbox{ in }& B_1^c,\\ 
\mathcal{D} = 0 	&\mbox{ on }& \partial B_1.
 \end{array}
\right.
\end{equation}

Let $\zeta\in C_c^\infty(\mathbb{R})$ be such that 
\begin{equation}\label{pptszeta}
\zeta\geq 0,\quad \zeta\not\equiv 0,\quad  \mbox{supp}(\zeta)\subset (0,1).
\end{equation}
For sufficiently large $T$ and $\ell$, we introduce the function  
$$
\iota(t)=\zeta^\ell\left(\frac{t}{T}\right),\quad t>0. 
$$

For sufficiently large $R$, let  $\{\xi_R\}_R\subset C_c^\infty(\mathbb{R}^N)$ be  a family of cut-off functions satisfying 
\begin{equation}\label{pptsxiR}
0\leq \xi_R\leq 1,\quad \xi_R\equiv 1\mbox{ in } B_R,\quad \mbox{supp}(\xi_R)\subset B_{2R},\quad |\nabla \xi_R|\leq C R^{-1},\quad |\Delta \xi_R|\leq CR^{-2}.
\end{equation}
We need also a family of cut-off functions $\{\eta_R\}_R\subset C_c^\infty(\mathbb{R}^N)$ satisfying 
\begin{equation}\label{pptsetaRI}
0\leq \eta_R\leq 1,\quad \eta_R\equiv 1\mbox{ in } B_{\sqrt R},\quad \mbox{supp}(\eta_R)\subset B_R
\end{equation}
and
\begin{equation}\label{pptsetaRII}
|\nabla \eta_R|\leq \frac{C}{|x|\ln R},\quad |\Delta \eta_R|\leq \frac{C}{|x|^2\ln R},\quad x\in C(\sqrt R,R).		
\end{equation}
For instance, taking a smooth function $\vartheta: \mathbb{R}\to [0,1]$ satisfying
$$
\vartheta(s)=\left\{\begin{array}{llll}
1, &\mbox{if}& s\leq 0,	\\
0,&\mbox{if}& s\geq 1,
\end{array}
\right.
$$
one can check easily that the family of functions $\{\eta_R\}_R$ defined by
$$
\eta_R(x)=\vartheta\left(\frac{\ln |x|}{\ln\sqrt{R}}-1\right),\quad x\in \mathbb{R}^N,
$$
satisfies \eqref{pptsetaRI} and \eqref{pptsetaRII}.

For problem \eqref{P} under the boundary condition \eqref{BC1}, we first introduce test functions of the form
\begin{equation}\label{testfP11}
\varphi(t,x)=\iota(t) F(x),\quad (t,x)\in \Omega,	
\end{equation}
where
$$
F(x)=\mathcal{N}(x) \xi_R^\ell(x),\quad x\in B_1^c. 
$$

\begin{lemma}\label{L1}
For sufficiently large $T$, $\ell$ and $R$, the function $\varphi$ defined by \eqref{testfP11} belongs to the test function space 	$\Phi$.
\end{lemma}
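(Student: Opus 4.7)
The plan is to verify, term by term, each of the four requirements defining $\Phi$: regularity, non-negativity, compact support in $\Omega$, and the Robin-type boundary identity on $\Gamma$. The first three are essentially trivial once the building blocks $\iota$, $\mathcal{N}$, and $\xi_R^\ell$ are scrutinized; the only non-automatic point is the boundary identity, and the whole construction of $\varphi$ is engineered so that \eqref{LKN} handles it.

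First I would record smoothness. Since $\zeta \in C_c^\infty(\mathbb{R})$ and $\ell$ is a (large) integer, $\iota(t) = \zeta^\ell(t/T) \in C_c^\infty(\mathbb{R})$ for every $T>0$. The function $\mathcal{N}(x)=|x|^k$ is $C^\infty$ on $\mathbb{R}^N \setminus \{0\}$ for every $k \in \mathbb{R}$, hence in particular on $B_1^c$. Finally $\xi_R^\ell \in C_c^\infty(\mathbb{R}^N)$. The product $\varphi(t,x)=\iota(t)\mathcal{N}(x)\xi_R^\ell(x)$ is therefore $C^\infty$ on $\Omega$, well within the required $C^{1,2}_{t,x}$ regularity. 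Non-negativity is immediate: $\iota \geq 0$ because $\zeta \geq 0$ (and $\ell$ is an even integer, or in any case a non-negative power of a non-negative function), $\mathcal{N} > 0$ on $B_1^c$, and $\xi_R \geq 0$.

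Next I would check compact support. Since $\mathrm{supp}(\zeta) \subset (0,1)$, we have $\mathrm{supp}(\iota) \subset [t_0,t_1]$ with $0 < t_0 < t_1 < T$, so time support is bounded away from $t=0$. Spatially, $\mathrm{supp}(\xi_R^\ell) \subset \overline{B_{2R}}$, so the spatial support of $\varphi(t,\cdot)$ lies in $\overline{B_{2R}} \cap B_1^c$, which is compact (recall $B_1^c$ is closed). Thus $\mathrm{supp}(\varphi) \subset [t_0,t_1] \times (\overline{B_{2R}} \cap B_1^c)$, a compact subset of $\Omega$ since $\Gamma \subset \Omega$.

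Finally, and this is the one computation where the definition of $\mathcal{N}$ and the constraint "$R$ large" actually matter, I would verify the boundary identity. Taking $R>1$, we have $B_1 \subset\subset B_R$, so $\xi_R \equiv 1$ on some open neighborhood $U$ of $\overline{B_1}$. Consequently $\xi_R^\ell \equiv 1$ and $\nabla(\xi_R^\ell) \equiv 0$ on $U \cap B_1^c$; in particular, on $\partial B_1$ one has $F(x)=\mathcal{N}(x)$ and $\nabla F(x)=\nabla \mathcal{N}(x)$. Therefore for $(t,x) \in \Gamma$,
\begin{equation*}
\left(\frac{\partial \varphi}{\partial \nu}+k\varphi\right)(t,x)
= \iota(t)\left(\frac{\partial F}{\partial \nu}+kF\right)(x)
= \iota(t)\left(\frac{\partial \mathcal{N}}{\partial \nu}+k\mathcal{N}\right)(x)=0,
\end{equation*}
by the second line of \eqref{LKN}. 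This shows $\varphi \in \Phi$. The only place where any "hard" point could hide is this last step, and it is handled entirely by the design choice $\mathcal{N}(x)=|x|^k$: the radial cutoff $\xi_R^\ell$ is constant near $\partial B_1$, so the Robin boundary condition of $\varphi$ is inherited verbatim from $\mathcal{N}$.
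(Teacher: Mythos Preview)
Your proof is correct and follows essentially the same approach as the paper. The paper's proof only verifies the Robin boundary identity $\left(\frac{\partial \varphi}{\partial \nu}+k\varphi\right)|_{\Gamma}=0$, dismissing the other conditions as evident, while you spell out regularity, non-negativity, and compact support explicitly; but the key computation—using $\xi_R\equiv 1$ near $\partial B_1$ to reduce the boundary condition for $F$ to that for $\mathcal{N}$, then invoking \eqref{LKN}—is identical.
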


\begin{proof}
We have just to show that 	$\left(\frac{\partial \varphi}{\partial \nu}+k\varphi\right)|_{\Gamma}= 0$.  In view of \eqref{testfP11}, we have 
\begin{equation}\label{obser1}
\frac{\partial \varphi}{\partial \nu}(t,x)+k\varphi(t,x)=\iota(t) \left(\frac{\partial F}{\partial \nu}(x)+kF(x)\right),\quad (t,x)\in \Gamma.
\end{equation}
On the other hand, for $x\in C(1,R)$, since $\xi_R\equiv 1\mbox{ in } B_R$, we get
\begin{eqnarray*}
\nabla F(x)&=&\nabla \left(\mathcal{N}(x) \xi_R^\ell(x)\right)	\\
&=& \xi_R^\ell(x)\nabla \mathcal{N}(x)+\mathcal{N}(x) \nabla (\xi_R^\ell(x))\\
&=& \nabla \mathcal{N}(x).
\end{eqnarray*}
Hence, by \eqref{LKN}, there holds
$$
\frac{\partial F}{\partial \nu}(x)+kF(x)=\frac{\partial \mathcal{N}}{\partial \nu}(x)+k\mathcal{N}(x)=0,\quad x\in \partial B_1,
$$
which implies by  \eqref{obser1} that
$$
\frac{\partial \varphi}{\partial \nu}(t,x)+k\varphi(t,x)=0,\quad (t,x)\in \Gamma.
$$
\end{proof}

Next, we introduce test functions of the form
\begin{equation}\label{testfP11-cr}
\varphi^*(t,x)=\iota(t) F^*(x),\quad (t,x)\in \Omega,	
\end{equation}
where
$$
F^*(x)=\mathcal{N}(x) \eta_R^\ell(x),\quad x\in B_1^c. 
$$
\begin{lemma}\label{L2}
For sufficiently large $T$, $\ell$ and $R$, the function $\varphi^*$ defined by \eqref{testfP11-cr} belongs to the test function space 	$\Phi$.
\end{lemma}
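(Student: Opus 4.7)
My plan is to adapt the proof of Lemma \ref{L1} verbatim, replacing the cutoff $\xi_R$ by $\eta_R$. Membership in $\Phi$ requires four properties of $\varphi^*$: it lies in $C_{t,x}^{1,2}(\Omega)$, it is nonnegative, its support is compactly contained in $\Omega$, and $\bigl(\tfrac{\partial \varphi^*}{\partial \nu}+k\varphi^*\bigr)|_\Gamma=0$. I would dispose of the first three quickly. The factor $\iota\in C_c^\infty((0,\infty))$ by \eqref{pptszeta}, while $F^*=\mathcal{N}\,\eta_R^\ell$ is smooth on $B_1^c$ since $\mathcal{N}(x)=|x|^k$ is smooth away from the origin and $\eta_R\in C_c^\infty(\mathbb{R}^N)$, and $F^*$ vanishes outside $\overline{B_R}\cap B_1^c$ by \eqref{pptsetaRI}. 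Nonnegativity is immediate from $\iota,\mathcal{N},\eta_R\geq 0$, and the support of $\varphi^*$ is compact in $\Omega$ because $B_1^c$ is closed, $\mathrm{supp}(\iota)\subset(0,T)$, and $\mathrm{supp}(F^*)\subset\overline{B_R}$.

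The only nontrivial ingredient is the boundary condition. As in \eqref{obser1}, the product structure $\varphi^*=\iota(t)F^*(x)$ yields
$$
\frac{\partial \varphi^*}{\partial\nu}(t,x)+k\varphi^*(t,x)=\iota(t)\left(\frac{\partial F^*}{\partial\nu}(x)+kF^*(x)\right),\qquad (t,x)\in\Gamma,
$$
so it suffices to check that $\tfrac{\partial F^*}{\partial\nu}+kF^*=0$ on $\partial B_1$. The decisive observation is that, provided $R$ is large enough that $\sqrt{R}>1$, the sphere $\partial B_1$ sits in the interior of $B_{\sqrt{R}}$, where by \eqref{pptsetaRI} one has $\eta_R\equiv 1$. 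Consequently $\eta_R^\ell\equiv 1$ in an open neighborhood of $\partial B_1$, so $\nabla(\eta_R^\ell)\equiv 0$ there, which gives $F^*=\mathcal{N}$ and $\nabla F^*=\nabla\mathcal{N}$ on $\partial B_1$. The required identity then follows at once from the boundary relation in \eqref{LKN}.

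I do not anticipate any serious obstacle: structurally the argument is identical to that of Lemma \ref{L1}, because the only property of $\xi_R$ used there was that $\xi_R\equiv 1$ on a region containing $\partial B_1$, and \eqref{pptsetaRI} grants precisely the same for $\eta_R$ once $R>1$. The hypothesis that $T$, $\ell$, and $R$ be sufficiently large is needed only to ensure that $\iota$ and $\eta_R$ are well-defined and nontrivial and to place $\partial B_1$ inside $\{\eta_R\equiv 1\}$; larger values of $\ell$ will be relevant later for the capacity-type estimates $I_1(\varphi^*),I_2(\varphi^*)$, but play no role in establishing membership of $\varphi^*$ in $\Phi$.
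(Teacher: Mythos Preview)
Your proposal is correct and follows essentially the same approach as the paper: reduce to checking the boundary condition, use the product form $\varphi^*=\iota(t)F^*(x)$ to factor out $\iota$, and then exploit $\eta_R\equiv 1$ on $B_{\sqrt R}$ (hence near $\partial B_1$) to get $\nabla F^*=\nabla\mathcal{N}$ and invoke \eqref{LKN}. The paper's proof is terser but structurally identical.
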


\begin{proof}
We have just to show that 	$\left(\frac{\partial \varphi^*}{\partial \nu}+k\varphi^*\right)|_{\Gamma}= 0$.  	In view of \eqref{testfP11-cr}, we have 
$$
\frac{\partial \varphi^*}{\partial \nu}(t,x)+k\varphi^*	(t,x)=\iota(t) \left(\frac{\partial F^*}{\partial \nu}(x)+kF^*(x)\right),\quad (t,x)\in \Gamma.
$$
On the other hand, for $x\in C(1,\sqrt R)$, since $\eta_R\equiv 1\mbox{ in } B_{\sqrt R}$, we get
$$
\nabla F^*(x)=\nabla \mathcal{N}(x).
$$
The rest of the proof is the same as that of Lemma \ref{L1}.
\end{proof}

For problem \eqref{P} under the boundary condition \eqref{BC3}, we first introduce test functions of the form
\begin{equation}\label{testfP12}
\psi(t,x)=\iota(t) G(x),\quad (t,x)\in \Omega,	
\end{equation}
where
$$
G(x)=\mathcal{D}(x) \xi_R^\ell(x),\quad x\in B_1^c. 
$$

\begin{lemma}\label{L3}
For sufficiently large $T$, $\ell$ and $R$, the function $\psi$ defined by \eqref{testfP12} belongs to the test function space 	$\Psi$.
\end{lemma}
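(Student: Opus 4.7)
The plan is to follow the pattern of Lemmas~\ref{L1} and \ref{L2}: the regularity $\psi\in C^{1,2}_{t,x}(\Omega)$, the nonnegativity $\psi\geq 0$ and the compact support of $\psi=\iota\,G$ in $\Omega$ all follow routinely from the corresponding properties of $\iota$, $\mathcal{D}$ and $\xi_R$ together with the support conditions \eqref{pptszeta} and \eqref{pptsxiR} (recall that $\mathcal{D}$ is a smooth function of $|x|$ on $B_1^c$ since it is bounded away from the origin, and that $\mathrm{supp}(\zeta)\subset(0,1)$ compactly, so $\mathrm{supp}(\iota)\subset[aT,bT]$ for some $0<a\leq b<1$). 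The substantive content is therefore to verify the two boundary conditions $\psi|_{\Gamma}=0$ and $\frac{\partial\psi}{\partial\nu}\bigr|_{\Gamma}\leq 0$.

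The first of these is immediate: by \eqref{LKD} we have $\mathcal{D}\equiv 0$ on $\partial B_1$, so $G\equiv 0$ on $\partial B_1$ and hence $\psi(t,x)=\iota(t)G(x)=0$ for every $(t,x)\in\Gamma$.

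The main step is the inequality $\frac{\partial\psi}{\partial\nu}\leq 0$ on $\Gamma$. For $R>1$, the cut-off $\xi_R$ is identically $1$ on a neighborhood of $\partial B_1$, so in that neighborhood $\psi(t,x)=\iota(t)\mathcal{D}(x)$, and therefore
$$
\frac{\partial\psi}{\partial\nu}(t,x)=\iota(t)\,\frac{\partial \mathcal{D}}{\partial\nu}(x),\qquad (t,x)\in\Gamma.
$$
Since $\nu$ is the outward unit normal with respect to $B_1^c$, it points toward the origin, i.e.\ $\nu(x)=-x/|x|$ for $x\in\partial B_1$. Writing $\mathcal{D}(x)=d(|x|)$ gives $\frac{\partial \mathcal{D}}{\partial\nu}\bigr|_{\partial B_1}=-d'(1)$, and an elementary differentiation in each of the three cases of the definition of $\mathcal{D}$ yields $d'(1)=N+k-2$ if $k>2-N$, $d'(1)=2-N-k$ if $k<2-N$, and $d'(1)=1$ if $k=2-N$. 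In every case $d'(1)>0$, whence $\frac{\partial\psi}{\partial\nu}\bigr|_{\Gamma}=-\iota(t)\,d'(1)\leq 0$, as required. This sign computation is the only nontrivial ingredient in the proof; no serious obstacle is anticipated.
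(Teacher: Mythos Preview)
Your proof is correct and follows essentially the same approach as the paper's own proof: both reduce to computing $\frac{\partial \psi}{\partial \nu}$ on $\Gamma$ by using that $\xi_R\equiv 1$ near $\partial B_1$ so that only $\nabla\mathcal{D}$ matters there, and then checking in each of the three cases for $k$ that the resulting normal derivative is nonpositive (indeed the paper records exactly your values $2-N-k$, $k+N-2$, $-1$). You additionally spell out $\psi|_{\Gamma}=0$ from $\mathcal{D}|_{\partial B_1}=0$, which the paper leaves implicit.
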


\begin{proof}
We have just to show that $\frac{\partial \mathcal{\psi}}{\partial \nu}|_{\Gamma}\leq 0$.	 In view of \eqref{testfP12}, we have
\begin{equation}\label{observation1}
\frac{\partial \psi}{\partial \nu}(t,x)=\iota(t) \frac{\partial G}{\partial \nu}(x),\quad (t,x)\in \Gamma.
\end{equation}
On the other hand, for $x\in C(1,R)$, since $\xi_R\equiv 1\mbox{ in } B_R$, we get
\begin{eqnarray*}
\nabla G(x)&=& \nabla \mathcal{D}(x)\\
&=& \left\{\begin{array}{llll}
\left(k|x|^{k-2}-(2-N)|x|^{-N}\right)x, &\mbox{if}& k>2-N,\\ 
\left((2-N)|x|^{-N}-k|x|^{k-2}\right)x, &\mbox{if}& k<2-N,\\ 
\left((2-N)|x|^{-N}\ln |x|+|x|^{-N}\right)x, &\mbox{if}& k=2-N.
\end{array}
\right.
\end{eqnarray*}
Hence, for $x\in \partial B_1$, there holds
$$
\frac{\partial G}{\partial \nu}(x)=\left\{\begin{array}{llll}
2-N-k, &\mbox{if}& k>2-N,\\ 
k+N-2, &\mbox{if}& k<2-N,\\ 
-1, &\mbox{if}& k=2-N.
\end{array}
\right.
$$
Thus, in view of \eqref{observation1} (notice that $\iota(t)\geq 0$), we obtain
\begin{equation}\label{useful-Dirichlet-ND}
\frac{\partial \psi}{\partial \nu}(t,x)=\left\{\begin{array}{llll}
(2-N-k)\iota(t)\leq 0, &\mbox{if}& k>2-N,\\ 
(k+N-2)\iota(t)\leq 0, &\mbox{if}& k<2-N,\\ 
-\iota(t)\leq 0, &\mbox{if}& k=2-N.
\end{array}
\right.
\end{equation}
\end{proof}

Next, we introduce test functions of the form
\begin{equation}\label{testfP12-cr}
\psi^*(t,x)=\iota(t) G^*(x),\quad (t,x)\in \Omega,	
\end{equation}
where
$$
G^*(x)=\mathcal{D}(x) \eta_R^\ell(x),\quad x\in B_1^c. 
$$

\begin{lemma}\label{L4}
For sufficiently large $T$, $\ell$ and $R$, the function $\psi^*$ defined by \eqref{testfP12-cr} belongs to the test function space 	$\Psi$.
\end{lemma}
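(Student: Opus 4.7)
The plan is to mirror the proof of Lemma \ref{L3} line by line, replacing the cut-off $\xi_R$ with $\eta_R$. First I would observe that the properties $\psi^* \in C^{1,2}_{t,x}(\Omega)$, $\psi^* \geq 0$, and $\operatorname{supp}(\psi^*) \subset\subset \Omega$ follow immediately from the construction: $\iota$ is smooth with $\operatorname{supp}(\iota) \subset (0, T)$, $\mathcal{D}$ is nonnegative and smooth on $B_1^c$, $\eta_R^\ell$ is nonnegative, smooth, and compactly supported in $B_R$; moreover, $\mathcal{D}|_{\partial B_1} = 0$ by \eqref{LKD} gives $\psi^*|_{\Gamma} = 0$.

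The only nontrivial condition is $\frac{\partial \psi^*}{\partial \nu}\big|_{\Gamma} \leq 0$, which I would verify by the same calculation as in Lemma \ref{L3}. The key observation is that for $R$ sufficiently large, $\partial B_1 \subset B_{\sqrt R}$, and by \eqref{pptsetaRI}, $\eta_R \equiv 1$ on $B_{\sqrt R}$. Thus for $x$ in a neighborhood of $\partial B_1$,
\[
G^*(x) = \mathcal{D}(x)\eta_R^\ell(x) = \mathcal{D}(x),
\]
so $\nabla G^*(x) = \nabla \mathcal{D}(x)$ there. Consequently, on $\partial B_1$, $\frac{\partial G^*}{\partial \nu}(x) = \frac{\partial \mathcal{D}}{\partial \nu}(x)$, which yields exactly the values $2-N-k$, $k+N-2$, or $-1$ computed in Lemma \ref{L3}, all of which are $\leq 0$ in their respective cases $k > 2-N$, $k < 2-N$, $k = 2-N$. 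Multiplying by $\iota(t) \geq 0$ then gives $\frac{\partial \psi^*}{\partial \nu}(t,x) \leq 0$ on $\Gamma$.

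There is no real obstacle here: the change from $\xi_R$ to $\eta_R$ affects only the behavior of the test function far from $\partial B_1$, while the boundary conditions are controlled entirely by the common region where $\eta_R \equiv 1 \equiv \xi_R$. So the proof can be written very briefly: reduce to the computation of $\nabla \mathcal{D}$ on $\partial B_1$ and invoke the display \eqref{useful-Dirichlet-ND} from the proof of Lemma \ref{L3} verbatim.
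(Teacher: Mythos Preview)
Your proposal is correct and follows essentially the same approach as the paper: the paper's proof simply says to proceed as in Lemma~\ref{L3}, using that $\eta_R\equiv 1$ in $B_{\sqrt R}$, to obtain the same boundary values \eqref{useful-Dirichlet-ND-cr}. Your write-up is a faithful expansion of this one-line reduction.
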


\begin{proof}
Proceeding as in the proof of Lemma \ref{L3}, in view of \eqref{testfP12-cr} and using that 	$\eta_R\equiv 1\mbox{ in } B_{\sqrt R}$, we get
\begin{equation}\label{useful-Dirichlet-ND-cr}
\frac{\partial \psi^*}{\partial \nu}(t,x)=\left\{\begin{array}{llll}
(2-N-k)\iota(t)\leq 0, &\mbox{if}& k>2-N,\\ 
(k+N-2)\iota(t)\leq 0, &\mbox{if}& k<2-N,\\ 
-\iota(t)\leq 0, &\mbox{if}& k=2-N.
\end{array}
\right.
\end{equation}
\end{proof}

\subsection{Useful estimates}

In this subsection, we shall estimate the terms $I_i(\mu)$, $i=1,2$, for $\mu\in\{\varphi,\varphi^*,\psi,\psi^*\}$, where the test functions $\varphi$, $\varphi^*$, $\psi$ and $\psi^*$ are defined respectively by \eqref{testfP11}, \eqref{testfP11-cr}, \eqref{testfP12} and \eqref{testfP12-cr}. Such estimates will play a crucial role in the proofs of our main results. 

\begin{lemma}\label{L2.7}
Let $\mu\in \{\varphi,\varphi^*\}$. 	For sufficiently large $T$, $\ell$ and $R$, there holds
\begin{equation}\label{estI1phi-phistar}
I_1(\mu)\leq C T^{1-\frac{\alpha+p}{p-1}}\left(\ln R+ R^{N+k-\frac{\sigma}{p-1}}\right).
\end{equation}
\end{lemma}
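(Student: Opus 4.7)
The plan is to exploit the product structure $\mu(t,x)=\iota(t)H(x)$ (with $H=F$ for $\mu=\varphi$ and $H=F^{*}$ for $\mu=\varphi^{*}$) to factor $I_1(\mu)$ into a one-dimensional time integral times a spatial integral over $B_1^c$. Since $\partial_t\mu(t,x)=\iota'(t)H(x)$ and $\mu,H\ge 0$, we have
$$
\mu^{-\frac{1}{p-1}}|\partial_t\mu|^{\frac{p}{p-1}}
 = \iota^{-\frac{1}{p-1}}(t)\,|\iota'(t)|^{\frac{p}{p-1}}\, H(x),
$$
so that
$$
I_1(\mu)= \Bigl(\int_0^\infty t^{-\frac{\alpha}{p-1}}\,\iota^{-\frac{1}{p-1}}(t)\,|\iota'(t)|^{\frac{p}{p-1}}\,dt\Bigr)\Bigl(\int_{B_1^c} |x|^{-\frac{\sigma}{p-1}} H(x)\,dx\Bigr).
$$

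For the time integral, I would use $\iota(t)=\zeta^\ell(t/T)$, compute $\iota'(t)=(\ell/T)\zeta^{\ell-1}(t/T)\zeta'(t/T)$, and collect powers of $\zeta$ to obtain an integrand of the form
$$
(\ell/T)^{\frac{p}{p-1}}\,t^{-\frac{\alpha}{p-1}}\,\zeta^{\ell-\frac{p}{p-1}}(t/T)\,|\zeta'(t/T)|^{\frac{p}{p-1}}.
$$
The only subtlety is choosing $\ell$ large enough (namely $\ell>p/(p-1)$) so that the exponent $\ell-p/(p-1)$ on $\zeta$ is nonnegative and the resulting function is smooth and compactly supported in $(0,T)$. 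The substitution $s=t/T$ then yields $T^{1-\frac{\alpha}{p-1}}\cdot T^{-\frac{p}{p-1}} = T^{1-\frac{\alpha+p}{p-1}}$ times a finite constant depending only on $\zeta,\ell,p,\alpha$. This explains the $T$-factor in \eqref{estI1phi-phistar}.

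For the spatial integral, since $H(x)=|x|^k \chi_R^\ell(x)$ with $\chi_R\in\{\xi_R,\eta_R\}$ a cut-off bounded by $1$ and supported in $B_{2R}$ (resp.~$B_R$), I would simply bound
$$
\int_{B_1^c}|x|^{-\frac{\sigma}{p-1}}H(x)\,dx \le \int_{1\le |x|\le 2R}|x|^{k-\frac{\sigma}{p-1}}dx \le C \int_1^{2R} r^{N-1+k-\frac{\sigma}{p-1}}\,dr.
$$
A standard case analysis on the sign of $N+k-\tfrac{\sigma}{p-1}$ produces the bound $C\bigl(\ln R + R^{N+k-\sigma/(p-1)}\bigr)$: the logarithmic term absorbs the borderline case $N+k=\sigma/(p-1)$ and the constant case $N+k<\sigma/(p-1)$ (where the integral is uniformly bounded, hence dominated by $\ln R$ for large $R$), while the power term dominates when $N+k>\sigma/(p-1)$.

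There is no real obstacle here; the only point requiring mild care is selecting $\ell$ large enough to make the negative powers of $\zeta$ harmless. Combining the two factors gives \eqref{estI1phi-phistar}, with the same bound holding uniformly for both $\mu=\varphi$ and $\mu=\varphi^{*}$ since the spatial estimate only uses the pointwise bounds on $\xi_R$ and $\eta_R$, not their finer derivative estimates (those will enter only in the companion estimate for $I_2$).
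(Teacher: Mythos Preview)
Your proposal is correct and follows essentially the same approach as the paper: factor $I_1(\mu)$ into the product of a time integral and a spatial integral, estimate the time part via the bound $\iota^{-1/(p-1)}|\iota'|^{p/(p-1)}\le C T^{-p/(p-1)}\zeta^{\ell-p/(p-1)}(t/T)$ followed by the rescaling $s=t/T$, and estimate the spatial part by bounding the cut-off by $1$ and integrating the radial power. The paper also proves only the case $\mu=\varphi$ and remarks that $\mu=\varphi^*$ is analogous, just as you handle both uniformly via $H$ and $\chi_R$.
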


\begin{proof}
We shall prove \eqref{estI1phi-phistar} in the case $\mu=\varphi$. The case $\mu=\varphi^*$ can be treated in a similar way.  In view of \eqref{I1phi} and \eqref{testfP11}, we have
\begin{equation}\label{estI1phi-phiN}
I_1(\varphi)=\left(\int_0^\infty t^{\frac{-\alpha}{p-1}} \iota^{\frac{-1}{p-1}}(t) |\iota'(t)|^{\frac{p}{p-1}}\,dt\right)\left(\int_{B_1^c} |x|^{\frac{-\sigma}{p-1}}F(x)\,dx\right).
\end{equation}
 On the other hand, one has
 $$
|\iota'(t)|\leq C T^{-1} \zeta^{\ell-1}\left(\frac{t}{T}\right),\quad t>0,
$$	
which yields
$$
\iota^{\frac{-1}{p-1}}(t)|\iota'(t)|^{\frac{p}{p-1}}\leq C T^{\frac{-p}{p-1}}\zeta^{\ell-\frac{p}{p-1}}\left(\frac{t}{T}\right),\quad t>0.
$$
Hence, by \eqref{pptszeta}, we obtain
\begin{eqnarray*}
\int_0^\infty  t^{\frac{-\alpha}{p-1}}\iota^{\frac{-1}{p-1}}(t)|\iota'(t)|^{\frac{p}{p-1}}\,dt &\leq & C T^{\frac{-p}{p-1}} \int_0^T 	t^{\frac{-\alpha}{p-1}}\zeta^{\ell-\frac{p}{p-1}}\left(\frac{t}{T}\right)\,dt\\
&=& CT^{1-\frac{\alpha+p}{p-1}}\int_{\mbox{supp}(\zeta)} s^{\frac{-\alpha}{p-1}}\zeta^{\ell-\frac{p}{p-1}}(s)\,ds.
\end{eqnarray*}	
 Notice that, since $\mbox{supp}(\zeta)\subset\subset (0,1)$, then $\displaystyle\int_{\mbox{supp}(\zeta)} s^{\frac{-\alpha}{p-1}}\xi^{\ell-\frac{p}{p-1}}(s)\,ds<\infty$ (for sufficiently large $\ell$). Consequently, we get
\begin{equation}\label{timeest-I1}
\int_0^\infty  t^{\frac{-\alpha}{p-1}}\iota^{\frac{-1}{p-1}}(t)|\iota'(t)|^{\frac{p}{p-1}}\,dt\leq C 	T^{1-\frac{\alpha+p}{p-1}}.
\end{equation}
Moreover, by the definition of the function $F$ and using \eqref{pptsxiR},  we obtain
\begin{eqnarray}\label{spaceest-I1}
\nonumber \int_{B_1^c} |x|^{\frac{-\sigma}{p-1}}F(x)\,dx&=& 	\int_{B_1^c} |x|^{\frac{-\sigma}{p-1}+k} \xi_R^\ell(x)\,dx\\
\nonumber &\leq & \int_{C(1,2R)} |x|^{\frac{-\sigma}{p-1}+k} \,dx\\
\nonumber &=& C \int_{r=1}^{2R} r^{N+k-1-\frac{\sigma}{p-1}}\,dr\\
&\leq & C \left(\ln R+ R^{N+k-\frac{\sigma}{p-1}}\right).
\end{eqnarray}
Then, \eqref{estI1phi-phistar} follows from  \eqref{estI1phi-phiN}, \eqref{timeest-I1} and \eqref{spaceest-I1}.
\end{proof}

\begin{lemma}\label{L2.8}
For sufficiently large $T$, $\ell$ and $R$, there holds	
\begin{equation}\label{estI2phi}
I_2(\varphi)\leq C 	T^{1-\frac{\alpha }{p-1}}R^{\frac{(k-2)p-(\sigma+k)}{p-1}+N}.
\end{equation}
Moreover, if  $\alpha=0$ and $p(N+k-2)=N+\sigma+k$, then 
\begin{equation}\label{estI2phistar}
I_2(\varphi^*)\leq CT (\ln R)^{\frac{-1}{p-1}}. 
\end{equation}
\end{lemma}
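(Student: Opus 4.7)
The plan is to exploit the product structure of the test functions. Since $\mu(t,x) = \iota(t)F_{\mu}(x)$ with $F_{\mu}\in\{F,F^{*}\}$, and $L_{k}$ acts only in the space variable, we have $L_{k}\mu = \iota\, L_{k}F_{\mu}$, so
\[
I_{2}(\mu)=\left(\int_{0}^{\infty} t^{-\alpha/(p-1)}\iota(t)\,dt\right)\left(\int_{B_{1}^{c}}|x|^{-\sigma/(p-1)}F_{\mu}^{-1/(p-1)}|L_{k}F_{\mu}|^{p/(p-1)}\,dx\right).
\]
The time factor is handled exactly as in Lemma~\ref{L2.7} via $s=t/T$ and the compact support of $\zeta$, giving at most $CT^{1-\alpha/(p-1)}$ in the first estimate and $CT$ in the critical case $\alpha=0$. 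The spatial factor is the heart of the argument.

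The next key step is the identity
\[
L_{k}(\mathcal{N}\chi) \;=\; \chi\,L_{k}\mathcal{N} + \mathcal{N}\Delta\chi + 2\nabla\chi\cdot\nabla\mathcal{N} - k\mathcal{N}\frac{x}{|x|^{2}}\cdot\nabla\chi,
\]
valid for any smooth $\chi$. Since $L_{k}\mathcal{N}=0$ by \eqref{LKN}, the first term drops out, and since $\chi=\xi_{R}^{\ell}$ (resp.\ $\eta_{R}^{\ell}$) is constant where its derivatives vanish, $L_{k}F_{\mu}$ is supported in the annulus $C(R,2R)$ (resp.\ $C(\sqrt{R},R)$). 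On this annulus I bound each term pointwise using $|\mathcal{N}|\lesssim|x|^{k}$, $|\nabla\mathcal{N}|\lesssim|x|^{k-1}$, together with the cutoff bounds \eqref{pptsxiR} and \eqref{pptsetaRII}.

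For $\mu=\varphi$, all three pieces give $|L_{k}F|\leq C|x|^{k-2}\xi_{R}^{\ell-2}$ on $C(R,2R)$. Taking the $p/(p-1)$-power, multiplying by $|x|^{-\sigma/(p-1)}F^{-1/(p-1)}=|x|^{-(\sigma+k)/(p-1)}\xi_{R}^{-\ell/(p-1)}$, and observing that $|x|\sim R$ on the annulus while the resulting exponent of $\xi_{R}$ is $\ell-2p/(p-1)\geq 0$ for $\ell$ large, leaves a bounded integrand on a domain of volume $\lesssim R^{N}$. Collecting exponents of $R$ produces exactly \eqref{estI2phi}.

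For $\mu=\varphi^{*}$ the annulus is $C(\sqrt{R},R)$, and the sharper cutoff bounds \eqref{pptsetaRII} give $|L_{k}F^{*}|\leq C|x|^{k-2}\eta_{R}^{\ell-2}/\ln R$, hence an extra factor $(\ln R)^{-p/(p-1)}$. The spatial exponent of $|x|$ in the integrand becomes $[(k-2)p-\sigma-k]/(p-1)$, and the critical identity $p(N+k-2)=N+\sigma+k$ simplifies this to $-N$ exactly. Therefore
\[
\int_{C(\sqrt{R},R)}|x|^{-N}\,dx \;=\; C\int_{\sqrt{R}}^{R}\frac{dr}{r}\;=\;\tfrac{C}{2}\ln R,
\]
which combined with the $(\ln R)^{-p/(p-1)}$ prefactor yields $(\ln R)^{1-p/(p-1)}=(\ln R)^{-1/(p-1)}$, giving \eqref{estI2phistar}. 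The main delicate point is the critical computation: ensuring the algebraic identity forces the radial integrand to scale precisely as $r^{-1}$, thereby producing the logarithm that drives the subcritical blow-up rate; a secondary but routine point is choosing $\ell$ large enough so that the fractional power $\xi_{R}^{\ell-2p/(p-1)}$ (resp.\ $\eta_{R}^{\ell-2p/(p-1)}$) remains nonnegative and bounded.
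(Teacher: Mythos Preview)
Your proposal is correct and follows essentially the same route as the paper: the product factorization of $I_2$, the Leibniz-type identity for $L_k(\mathcal{N}\chi)$ combined with $L_k\mathcal{N}=0$ to localize $L_kF$ (resp.\ $L_kF^*$) to the annulus, the pointwise bound $|L_kF|\leq C|x|^{k-2}\xi_R^{\ell-2}$ (the paper states it as $CR^{k-2}\xi_R^{\ell-2}$, which is equivalent on $C(R,2R)$), and the critical computation reducing the radial integrand to $r^{-1}$ so that the annular integral produces a single $\ln R$ against the prefactor $(\ln R)^{-p/(p-1)}$. The only cosmetic difference is that the paper writes the derivative terms of $L_k(\mathcal{N}\xi_R^\ell)$ out explicitly rather than via a general product formula.
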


\begin{proof}
In view of \eqref{DF-LK}, \eqref{I2phi}  and \eqref{testfP11}, we get 
\begin{equation}\label{ayacontinue}
I_2(\varphi)=\left(\int_0^\infty t^{\frac{-\alpha}{p-1}}\iota(t)\,dt\right)\left(\int_{B_1^c} |x|^{\frac{-\sigma}{p-1}} F(x)^{\frac{-1}{p-1}}|L_kF(x)|^{\frac{p}{p-1}}\,dx\right).
\end{equation}
On the other hand, we have
\begin{eqnarray*}
\int_0^\infty t^{\frac{-\alpha}{p-1}} \iota(t)\,dt&=& \int_0^T 	t^{\frac{-\alpha}{p-1}} \zeta^\ell\left(\frac{t}{T}\right)\,dt\\
&=& T^{1-\frac{\alpha}{p-1}}\int_{\mbox{supp}(\zeta)}s^{\frac{-\alpha}{p-1}} \zeta^\ell(s)\,ds,
\end{eqnarray*}
which yields
\begin{equation}\label{OKvasi}
\int_0^\infty t^{\frac{-\alpha}{p-1}} \iota(t)\,dt=C T^{1-\frac{\alpha}{p-1}}.
\end{equation}	
By the definition of the function $F$ and using \eqref{pptsxiR},  we obtain 
\begin{equation}\label{integsupportCR2R}
\int_{B_1^c} |x|^{\frac{-\sigma}{p-1}} F^{\frac{-1}{p-1}}(x)|L_kF(x)|^{\frac{p}{p-1}}\,dx=\int_{C(R,2R)} |x|^{\frac{-\sigma}{p-1}} F^{\frac{-1}{p-1}}(x)|L_kF(x)|^{\frac{p}{p-1}}\,dx.	
\end{equation}
Moreover,  by \eqref{DF-LK} and \eqref{LKN}, elementary calculations show that 
$$
L_kF(x)= \mathcal{N}(x)\Delta(\xi_R^\ell(x))+2\ell \xi_R^{\ell-1}(x)\nabla  \mathcal{N}(x)\cdot \nabla \xi_R(x)-k \ell \xi_R^{\ell-1}(x) \mathcal{N}(x)\nabla \xi_R(x)\cdot \frac{x}{|x|^2},
$$
for every $x\in C(R,2R)$. Then, using Cauchy-Schwarz inequality, \eqref{pptsxiR} and  the property
$$
\Delta (\xi_R^\ell)=\ell(\ell-1)\xi_R^{\ell-2}|\nabla \xi_R|^2+\ell\xi_R^{\ell-1}\Delta \xi_R,
$$
we deduce that
\begin{eqnarray*}
|L_kF(x)|&\leq &  \mathcal{N}(x) |\Delta(\xi_R^\ell(x))|+C \xi_R^{\ell-1}(x)|\nabla  \mathcal{N}(x)| |\nabla \xi_R(x)|+C \xi_R^{\ell-1}(x)\mathcal{N}(x)|\nabla \xi_R(x)||x|^{-1}\\
&=& |x|^k |\Delta(\xi_R^\ell(x))|+C\xi_R^{\ell-1}(x)|x|^{k-1}|\nabla \xi_R(x)|\\
&\leq & C R^{-2}|x|^k \xi_R^{\ell-2}(x)+C R^{-1}|x|^{k-1}\xi_R^{\ell-1}(x)\\
&\leq & C \xi_R^{\ell-2}(x) \left(R^{-2}|x|^k+R^{-1}|x|^{k-1}\right)\\
&\leq & C R^{k-2}\xi_R^{\ell-2}(x),
\end{eqnarray*}
which yields (in view of \eqref{integsupportCR2R} and using that $0\leq \xi_R\leq 1$)
\begin{eqnarray}\label{fisa3}
\nonumber \int_{B_1^c} |x|^{\frac{-\sigma}{p-1}} F^{\frac{-1}{p-1}}(x)|L_kF(x)|^{\frac{p}{p-1}}\,dx &\leq & C R^{\frac{(k-2)p-(\sigma+k)}{p-1}} \int_{C(R,2R)} \xi_R^{\ell-\frac{2p}{p-1}}(x)\,dx\\
&\leq & C R^{\frac{(k-2)p-(\sigma+k)}{p-1}+N}. 
\end{eqnarray}
Thus, by \eqref{ayacontinue},  	\eqref{OKvasi} and \eqref{fisa3}, we obtain \eqref{estI2phi}.

Now, we consider the case 
\begin{equation}\label{assumptsalpha}
\alpha=0,\,\, p(N+k-2)=N+\sigma+k.	
\end{equation}
By \eqref{DF-LK}, \eqref{I2phi}  and \eqref{testfP11-cr}, there holds 
\begin{equation}\label{ayacontinue*}
I_2(\varphi^*)=\left(\int_0^\infty \iota(t)\,dt\right)\left(\int_{B_1^c} |x|^{\frac{-\sigma}{p-1}} {F^*}^{\frac{-1}{p-1}}(x)|L_kF^*(x)|^{\frac{p}{p-1}}\,dx\right).
\end{equation}
By the definition of the function $F^*$ and using \eqref{pptsetaRI}, we get
\begin{equation}\label{integsupportCR2R*}
\int_{B_1^c} |x|^{\frac{-\sigma}{p-1}} {F^*}^{\frac{-1}{p-1}}(x)|L_kF^*(x)|^{\frac{p}{p-1}}\,dx=\int_{C(\sqrt R,R)} |x|^{\frac{-\sigma}{p-1}} {F^*}^{\frac{-1}{p-1}}(x)|L_kF^*(x)|^{\frac{p}{p-1}}\,dx.	
\end{equation}
As in the proof of \eqref{estI2phi}, by \eqref{DF-LK}, \eqref{LKN} and \eqref{pptsetaRII}, for $x\in C(\sqrt R,R)$, we obtain
$$
|L_kF^*(x)|\leq C \frac{|x|^{k-2}}{\ln R} \eta_R^{\ell-2}(x), 
$$
which yields (in view of \eqref{integsupportCR2R*} and using that $0\leq \eta_R\leq 1$)  
\begin{eqnarray*}
\int_{B_1^c} |x|^{\frac{-\sigma}{p-1}} {F^*}^{\frac{-1}{p-1}}(x)|L_kF^*(x)|^{\frac{p}{p-1}}\,dx&\leq & C (\ln R)^{\frac{-p}{p-1}}\int_{C(\sqrt R,R)}|x|^{\frac{(k-2)p-(\sigma+k)}{p-1}}\eta_R^{\ell-\frac{2p}{p-1}}(x)\,dx\\
&\leq& C (\ln R)^{\frac{-p}{p-1}}\int_{r=\sqrt R}^R r^{\frac{(N+k-2)p-(\sigma+k+N)}{p-1}-1}\,dr.
\end{eqnarray*}
Next, using \eqref{assumptsalpha}, we obtain
$$
\int_{r=\sqrt R}^R r^{\frac{(N+k-2)p-(\sigma+k+N)}{p-1}-1}\,dr=\frac{1}{2}\ln R.
$$
Consequently, we get 
\begin{equation}\label{Ouf}
\int_{B_1^c} |x|^{\frac{-\sigma}{p-1}} {F^*}^{\frac{-1}{p-1}}(x)|L_kF^*(x)|^{\frac{p}{p-1}}\,dx\leq C(\ln R)^{\frac{-1}{p-1}}.	
\end{equation}
Therefore, \eqref{estI2phistar} follows from \eqref{OKvasi} (with $\alpha=0$), \eqref{ayacontinue*} and \eqref{Ouf}.
\end{proof}

\begin{lemma}\label{L2.9}
Let $\mu\in \{\psi,\psi^*\}$. For sufficiently large $T$, $\ell$ and $R$, there holds
\begin{equation}\label{estI1psipsistar}
I_1(\mu)\leq C T^{1-\frac{\alpha+p}{p-1}}\times \left\{\begin{array}{llll}
\ln R+R^{N+k-\frac{\sigma}{p-1}}, &\mbox{if}& 	k>2-N,\\
R^{2-\frac{\sigma}{p-1}}+\ln R, &\mbox{if}& 	k<2-N,\\ 
\ln R\left(R^{2-\frac{\sigma}{p-1}}+\ln R\right), &\mbox{if}& k=2-N.
\end{array}
\right.
\end{equation}
\end{lemma}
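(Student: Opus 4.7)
The plan is to mimic the factorization argument of Lemma \ref{L2.7}. Since $\mu(t,x) = \iota(t) H(x)$ where $H \in \{G, G^*\}$, a direct calculation gives $\mu^{-1/(p-1)} |\partial_t \mu|^{p/(p-1)} = H(x) \cdot \iota^{-1/(p-1)}(t) |\iota'(t)|^{p/(p-1)}$. Hence by Fubini,
\begin{equation*}
I_1(\mu) = \left(\int_0^\infty t^{-\alpha/(p-1)} \iota^{-1/(p-1)}(t) |\iota'(t)|^{p/(p-1)}\,dt\right)\left(\int_{B_1^c} |x|^{-\sigma/(p-1)} H(x)\,dx\right).
\end{equation*}
The time integral is identical to the one treated in Lemma \ref{L2.7} and is bounded by $CT^{1-(\alpha+p)/(p-1)}$, so the work reduces to bounding the spatial integral in each of the three cases.

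In every case I will use the pointwise bound $0 \le \xi_R^\ell, \eta_R^\ell \le 1$ together with the support restriction (contained in $B_{2R}$ for $\psi$, in $B_R$ for $\psi^*$), so both $H = G$ and $H = G^*$ admit the same upper bound, which justifies treating them simultaneously. For $k > 2-N$, since $1 - |x|^{2-N-k} \le 1$, one has $\mathcal{D}(x) \le |x|^k$, so the spatial integral is controlled by $C \int_1^{2R} r^{N+k-1-\sigma/(p-1)}\,dr$, which is bounded by a constant multiple of $\ln R + R^{N+k-\sigma/(p-1)}$, exactly as in the proof of Lemma \ref{L2.7}. For $k < 2-N$, the analogous bound $\mathcal{D}(x) \le |x|^{2-N}$ reduces the spatial integral to $C \int_1^{2R} r^{1 - \sigma/(p-1)}\,dr$, which is controlled by $R^{2-\sigma/(p-1)} + \ln R$ (covering the three sub-cases of sign of the exponent by a single expression).

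For the critical case $k = 2-N$, the logarithmic factor in $\mathcal{D}(x) = |x|^{2-N}\ln|x|$ must be accounted for: on the support of $\xi_R^\ell$ (resp.\ $\eta_R^\ell$) one has $\ln|x| \le \ln(2R) \le C \ln R$, so $\mathcal{D}(x) \le C \ln R \cdot |x|^{2-N}$. The same radial computation as above then yields $C \ln R (R^{2-\sigma/(p-1)} + \ln R)$. Combining the time bound with these three spatial estimates yields \eqref{estI1psipsistar}.

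The arguments are essentially routine bookkeeping and I do not anticipate a real obstacle; the only point requiring minor care is verifying that the single upper bound $\ln R + R^{N+k-\sigma/(p-1)}$ (and its analogues in the other cases) is valid regardless of the sign of the exponent $N+k-\sigma/(p-1)$, which amounts to noting that when this exponent is negative the integral is bounded by a constant, and this constant is absorbed into the displayed expression for $R$ large.
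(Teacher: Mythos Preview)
Your proposal is correct and follows essentially the same approach as the paper: factorize $I_1(\mu)$ into a time integral (already handled in Lemma~\ref{L2.7}) times a spatial integral, then bound $\mathcal{D}(x)$ by $|x|^{k}$, $|x|^{2-N}$, or $|x|^{2-N}\ln|x|$ according to the three cases and compute the resulting radial integrals. The only cosmetic difference is that the paper proves the case $\mu=\psi$ and declares $\mu=\psi^*$ similar, whereas you explicitly note that the common bound $0\le \xi_R^\ell,\eta_R^\ell\le 1$ together with the support restrictions makes both cases simultaneous.
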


\begin{proof}
We shall prove \eqref{estI1psipsistar} for $\mu=\psi$. The case $\mu=\psi^*$ can be treated in a similar way. In view of \eqref{I1phi} and \eqref{testfP12}, we have
\begin{equation}\label{I1psi}
I_1(\psi)=\left(\int_0^\infty t^{\frac{-\alpha}{p-1}} \iota^{\frac{-1}{p-1}}(t) |\iota'(t)|^{\frac{p}{p-1}}\,dt\right)\left(\int_{B_1^c} |x|^{\frac{-\sigma}{p-1}}G(x)\,dx\right).	
\end{equation}
By the definition of the function $G$ and using \eqref{pptsxiR}, we get
\begin{equation}\label{estscdtermI1psi}
\int_{B_1^c} |x|^{\frac{-\sigma}{p-1}}G(x)\,dx	\leq \int_{C(1,2R)}|x|^{\frac{-\sigma}{p-1}}\mathcal{D}(x)\,dx.
\end{equation}
If $k>2-N$, by the defintion of the function $\mathcal{D}$, we obtain
\begin{eqnarray*}
\int_{C(1,2R)}|x|^{\frac{-\sigma}{p-1}}\mathcal{D}(x)\,dx&=& \int_{C(1,2R)}|x|^{k-\frac{\sigma}{p-1}}\left(1-|x|^{2-N-k}\right)\,dx\\
&\leq & C \int_{r=1}^{2R}r^{N-1+k-\frac{\sigma}{p-1}}\,dr\\
&\leq & C \left(\ln R+R^{N+k-\frac{\sigma}{p-1}}\right).
\end{eqnarray*}
If $k<2-N$, we get
\begin{eqnarray*}
\int_{C(1,2R)}|x|^{\frac{-\sigma}{p-1}}\mathcal{D}(x)\,dx&=& \int_{C(1,2R)}|x|^{2-N-\frac{\sigma}{p-1}}\left(1-|x|^{k-2+N}\right)\,dx\\
&\leq & C \int_{r=1}^{2R}r^{1-\frac{\sigma}{p-1}}\,dr\\
&\leq & C \left(R^{2-\frac{\sigma}{p-1}}+\ln R\right).
\end{eqnarray*}
If $k=2-N$, then
\begin{eqnarray*}
\int_{C(1,2R)}|x|^{\frac{-\sigma}{p-1}}\mathcal{D}(x)\,dx&=& \int_{C(1,2R)}|x|^{2-N-\frac{\sigma}{p-1}}\ln |x|\textcolor{red}{dx}\\
&\leq & C \ln R \int_{r=1}^{2R}r^{1-\frac{\sigma}{p-1}}\,dr\\
&\leq & C \ln R\left(R^{2-\frac{\sigma}{p-1}}+\ln R\right).
\end{eqnarray*}
Hence, we deduce that 
\begin{equation}\label{aya3ada}
\int_{C(1,2R)}|x|^{\frac{-\sigma}{p-1}}\mathcal{D}(x)\,dx\leq C 	\left\{\begin{array}{llll}
\ln R+R^{N+k-\frac{\sigma}{p-1}}, &\mbox{if}& 	k>2-N,\\
R^{2-\frac{\sigma}{p-1}}+\ln R, &\mbox{if}& 	k<2-N,\\
\ln R\left(R^{2-\frac{\sigma}{p-1}}+\ln R\right), &\mbox{if}& k=2-N.
\end{array}
\right.
\end{equation}
In view of \eqref{timeest-I1}, \eqref{I1psi} and \eqref{aya3ada}, we obtain \eqref{estI1psipsistar}.
\end{proof}

\begin{lemma}\label{L2.10}
For sufficiently large $T$, $\ell$ and $R$, there holds
\begin{equation}\label{estimation-I2psi}
I_2(\psi)\leq C T^{1-\frac{\alpha}{P-1}}\left\{\begin{array}{llll}
R^{\frac{(k-2)p-(\sigma+k)}{p-1}+N},&\mbox{if}& k>2-N,\\ 
R^{\frac{-(\sigma+2)}{p-1}},&\mbox{if}& k<2-N,\\ 
R^{\frac{-(\sigma+2)}{p-1}}\ln R	,&\mbox{if}& k=2-N.
\end{array}
\right. 
\end{equation}
\end{lemma}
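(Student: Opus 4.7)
The plan is to mirror the structure of Lemma~\ref{L2.8}, but replacing the Neumann-type weight $\mathcal{N}$ by the Dirichlet-type weight $\mathcal{D}$ and accounting for the three regimes of $k$ that enter the definition of $\mathcal{D}$. Since $\psi(t,x)=\iota(t)G(x)$ with $G=\mathcal{D}\xi_R^\ell$ and $L_k$ acts only in $x$, we have $L_k\psi=\iota(t)L_kG$, so
$$
I_2(\psi)=\left(\int_0^\infty t^{\frac{-\alpha}{p-1}}\iota(t)\,dt\right)\left(\int_{B_1^c}|x|^{\frac{-\sigma}{p-1}}G^{\frac{-1}{p-1}}(x)|L_kG(x)|^{\frac{p}{p-1}}\,dx\right).
$$
The time factor equals $CT^{1-\alpha/(p-1)}$ by \eqref{OKvasi}, so the whole task reduces to bounding the spatial integral, which by \eqref{pptsxiR} is supported in $C(R,2R)$.

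Next, I would expand $L_kG$ by a Leibniz computation. Since $L_k\mathcal{D}=0$ on $B_1^c$ by \eqref{LKD}, the same expansion used for $F=\mathcal{N}\xi_R^\ell$ in Lemma~\ref{L2.8} gives
$$
L_kG(x)=\mathcal{D}(x)\Delta(\xi_R^\ell(x))+2\ell\xi_R^{\ell-1}(x)\nabla\mathcal{D}(x)\cdot\nabla\xi_R(x)-k\ell\xi_R^{\ell-1}(x)\mathcal{D}(x)\nabla\xi_R(x)\cdot\frac{x}{|x|^2}
$$
for $x\in C(R,2R)$. The gradient $\nabla\mathcal{D}$ has already been computed explicitly in the proof of Lemma~\ref{L3}.

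The core of the argument is then the case-by-case pointwise estimate of $\mathcal{D}$ and $\nabla\mathcal{D}$ on $C(R,2R)$, where $|x|\sim R$ is large. For $k>2-N$ the factor $1-|x|^{2-N-k}$ is bounded and $\mathcal{D}(x)\leq C|x|^k$, $|\nabla\mathcal{D}(x)|\leq C|x|^{k-1}$. For $k<2-N$ the factor $1-|x|^{k-2+N}$ is bounded and $\mathcal{D}(x)\leq C|x|^{2-N}$, $|\nabla\mathcal{D}(x)|\leq C|x|^{1-N}$. For $k=2-N$, $\mathcal{D}(x)\leq C|x|^{2-N}\ln|x|\leq C R^{2-N}\ln R$ and $|\nabla\mathcal{D}(x)|\leq CR^{1-N}\ln R$. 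Combining with $|\nabla\xi_R|\leq CR^{-1}$, $|\Delta\xi_R|\leq CR^{-2}$, and $\Delta(\xi_R^\ell)=\ell(\ell-1)\xi_R^{\ell-2}|\nabla\xi_R|^2+\ell\xi_R^{\ell-1}\Delta\xi_R$, I would obtain on $C(R,2R)$
$$
|L_kG(x)|\leq C\xi_R^{\ell-2}(x)\times
\begin{cases}
R^{k-2},& k>2-N,\\
R^{-N},& k<2-N,\\
R^{-N}\ln R,& k=2-N.
\end{cases}
$$

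Plugging this into $|x|^{-\sigma/(p-1)}G^{-1/(p-1)}|L_kG|^{p/(p-1)}$, using $G\geq c\mathcal{D}\xi_R^\ell$ (with $\mathcal{D}\sim R^k$, $R^{2-N}$, $R^{2-N}\ln R$ respectively) together with $0\leq\xi_R\leq 1$ and $\ell$ large enough so that $\xi_R^{\ell-2p/(p-1)}\leq 1$, and integrating over $C(R,2R)$ (volume $\sim R^N$), the three resulting exponents of $R$ simplify to $\frac{(k-2)p-(\sigma+k)}{p-1}+N$, $\frac{-(\sigma+2)}{p-1}$, and $\frac{-(\sigma+2)}{p-1}$ respectively, with an extra $\ln R$ in the last case coming from the surviving $(\ln R)^{p/(p-1)-1/(p-1)}=\ln R$. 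Combined with the time factor this yields \eqref{estimation-I2psi}. The main bookkeeping obstacle is keeping the algebraic simplification of the exponents clean in the sub-critical cases $k\leq 2-N$, where several cancellations occur between the $R^{(N-2)/(p-1)}$ arising from $G^{-1/(p-1)}$, the $R^{-Np/(p-1)}$ from $|L_kG|^{p/(p-1)}$, and the volume factor $R^N$.
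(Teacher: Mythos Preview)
Your proposal is correct and follows essentially the same approach as the paper: factorize $I_2(\psi)$ into the time integral \eqref{OKvasi} and a spatial integral supported on $C(R,2R)$, expand $L_kG$ via the Leibniz formula using $L_k\mathcal{D}=0$, obtain the pointwise bounds $|L_kG|\leq C\xi_R^{\ell-2}R^{k-2}$, $CR^{-N}$, $CR^{-N}\ln R$ in the three regimes, and then combine with $G^{-1/(p-1)}\sim |x|^{-k/(p-1)}$, $|x|^{(N-2)/(p-1)}$, $|x|^{(N-2)/(p-1)}(\ln|x|)^{-1/(p-1)}$ respectively before integrating over the annulus. The only cosmetic difference is that the paper keeps the factors $(1-|x|^{2-N-k})^{-1/(p-1)}$ etc.\ explicit in the intermediate display before bounding them by a constant on $C(R,2R)$, whereas you absorb them into $C$ immediately.
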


\begin{proof}
In view of \eqref{I2phi} and \eqref{testfP12}, we have
\begin{equation}\label{I1NonI2psi}
I_2(\psi)=\left(\int_0^\infty t^{\frac{-\alpha}{p-1}} \iota(t)\,dt\right)\left(\int_{B_1^c} |x|^{\frac{-\sigma}{p-1}} G^{\frac{-1}{p-1}}(x)|L_kG(x)|^{\frac{p}{p-1}}\,dx\right).
\end{equation}	
By the definition of the function $G$ and using \eqref{pptsxiR},  we obtain 
\begin{equation}\label{integsupportCR2RG}
\int_{B_1^c} |x|^{\frac{-\sigma}{p-1}} G^{\frac{-1}{p-1}}(x)|L_kG(x)|^{\frac{p}{p-1}}\,dx=\int_{C(R,2R)} |x|^{\frac{-\sigma}{p-1}} G^{\frac{-1}{p-1}}(x)|L_kG(x)|^{\frac{p}{p-1}}\,dx.	
\end{equation}
Moreover,  by \eqref{DF-LK} and \eqref{LKD}, elementary calculations show that 
$$
L_kG(x)= \mathcal{D}(x)\Delta(\xi_R^\ell(x))+2\ell \xi_R^{\ell-1}(x)\nabla  \mathcal{D}(x)\cdot \nabla \xi_R(x)-k \ell \xi_R^{\ell-1}(x) \mathcal{D}(x)\nabla \xi_R(x)\cdot \frac{x}{|x|^2},
$$
for every $x\in C(R,2R)$, which yields (in view of \eqref{pptsxiR})
\begin{equation}\label{LKGxest}
|L_kG(x)|\leq  C \xi_R^{\ell-2}(x)\left( R^{-2} \mathcal{D}(x)+R^{-1}|\nabla\mathcal{D}(x)|\right).
\end{equation}
If $k>2-N$, by the definition of the function $\mathcal{D}$ and using \eqref{LKGxest},  we obtain
\begin{eqnarray}\label{samecas1k}
\nonumber  |L_kG(x)|&\leq & C \xi_R^{\ell-2}(x)\bigg(R^{-2}  |x|^k\left(1-|x|^{2-N-k}\right)+R^{-1}\left(|x|^{k-1}+|x|^{1-N}\right) \bigg)\\
 &\leq & C R^{k-2} \xi_R^{\ell-2}(x).
\end{eqnarray}
If $k<2-N$, we get
\begin{eqnarray}\label{samecas2k}
\nonumber  |L_kG(x)|&\leq & C \xi_R^{\ell-2}(x)\bigg(R^{-2}  |x|^{2-N}\left(1-|x|^{k-2+N}\right)+R^{-1}\left(|x|^{k-1}+|x|^{1-N}\right) \bigg)\\
 &\leq & C R^{-N} \xi_R^{\ell-2}(x).
\end{eqnarray}
If $k=2-N$, there holds
\begin{eqnarray}\label{samecas3k}
\nonumber  |L_kG(x)|&\leq & C \xi_R^{\ell-2}(x)\bigg(R^{-2}  |x|^{2-N}\ln |x|+R^{-1}|x|^{1-N}(\ln |x|+1)\bigg)\\
 &\leq & C R^{-N} \ln R\, \xi_R^{\ell-2}(x).
\end{eqnarray}
Hence, by the definition of the function $G$, it follows from  \eqref{samecas1k}, \eqref{samecas2k} and \eqref{samecas3k} that 
$$
|x|^{\frac{-\sigma}{p-1}}G^{\frac{-1}{p-1}}(x)|L_kG(x)|^{\frac{p}{p-1}}\leq C \xi_R^{\ell-\frac{2p}{p-1}}(x)	\left\{\begin{array}{llll}
R^{\frac{(k-2)p}{p-1}}|x|^{\frac{-(\sigma+k)}{p-1}} \left(1-|x|^{2-N-k}\right)^{\frac{-1}{p-1}},&\mbox{if}& k>2-N,\\ 
R^{\frac{-Np}{p-1}} |x|^{\frac{N-2-\sigma}{p-1}} \left(1-|x|^{k-2+N}\right)^{\frac{-1}{p-1}},&\mbox{if}& k<2-N,\\ 
R^{\frac{-Np}{p-1}}(\ln R)^{\frac{p}{p-1}} |x|^{\frac{N-2-\sigma}{p-1}} (\ln |x|)^{\frac{-1}{p-1}},&\mbox{if}& k=2-N,	
\end{array}
\right.
$$
which yields (since $0\leq \xi_R\leq 1$)
\begin{equation}\label{JLpense}
\int_{C(R,2R)} |x|^{\frac{-\sigma}{p-1}} G^{\frac{-1}{p-1}}(x)|L_kG(x)|^{\frac{p}{p-1}}\,dx	\leq C  \left\{\begin{array}{llll}
R^{\frac{(k-2)p-(\sigma+k)}{p-1}+N},&\mbox{if}& k>2-N,\\ 
R^{\frac{-(\sigma+2)}{p-1}},&\mbox{if}& k<2-N,\\ 
R^{\frac{-(\sigma+2)}{p-1}}\ln R	,&\mbox{if}& k=2-N.
\end{array}
\right.
\end{equation}
Thus, \eqref{estimation-I2psi} follows from \eqref{OKvasi}, \eqref{I1NonI2psi} and \eqref{JLpense}.
 \end{proof}

\begin{lemma}\label{L2.11}
Let  $\alpha=0$, $k>2-N$ and $p(N+k-2)=N+\sigma+k$.  For sufficiently large $T$, $\ell$ and $R$, there holds
\begin{equation}\label{estI2psi*}
I_2(\psi^*)\leq CT(\ln R)^{\frac{-1}{p-1}}. 
\end{equation} 	
\end{lemma}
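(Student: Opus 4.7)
The plan is to mirror the proof of \eqref{estI2phistar} in Lemma~\ref{L2.8}, but with $\mathcal{N}$ replaced by $\mathcal{D}$. The essential point is that the support of $L_k G^*$ is confined to the annulus $C(\sqrt R,R)$ (because $\eta_R\equiv 1$ on $B_{\sqrt R}$ kills derivatives there, $\eta_R$ vanishes outside $B_R$, and $L_k\mathcal{D}=0$ in $B_1^c$ by \eqref{LKD}), and on this annulus $|x|\geq \sqrt{R}$ is large, so the correction term $|x|^{2-N-k}$ in $\mathcal{D}(x)=|x|^k(1-|x|^{2-N-k})$ is negligible; thus $\mathcal{D}$ is comparable to $\mathcal{N}$ there.

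First I would use $\alpha=0$ together with the definition of $\psi^*$ to split
$$
I_2(\psi^*)=\left(\int_0^\infty \iota(t)\,dt\right)\left(\int_{C(\sqrt R,R)} |x|^{\frac{-\sigma}{p-1}} {G^*}^{\frac{-1}{p-1}}(x)|L_kG^*(x)|^{\frac{p}{p-1}}\,dx\right),
$$
with the first factor bounded by $CT$ thanks to \eqref{OKvasi} with $\alpha=0$. For the spatial factor, I would expand via the product rule, use $L_k\mathcal{D}=0$, and estimate exactly as in \eqref{LKGxest} to get, for $x\in C(\sqrt R,R)$,
$$
|L_kG^*(x)|\leq C\,\eta_R^{\ell-2}(x)\left(R_{\ln}^{-1}|x|^{-2}\mathcal{D}(x)+R_{\ln}^{-1}|x|^{-1}|\nabla\mathcal{D}(x)|\right),\qquad R_{\ln}:=\ln R,
$$
by \eqref{pptsetaRII}. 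For $k>2-N$ and $|x|\geq\sqrt R$ large, $\mathcal{D}(x)\leq |x|^k$, $\mathcal{D}(x)\geq\tfrac12|x|^k$ and $|\nabla\mathcal{D}(x)|\leq C|x|^{k-1}$, which give the clean bound $|L_kG^*(x)|\leq C(\ln R)^{-1}|x|^{k-2}\eta_R^{\ell-2}(x)$ and the lower bound ${G^*}^{-1/(p-1)}(x)\leq C|x|^{-k/(p-1)}\eta_R^{-\ell/(p-1)}(x)$.

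Assembling these and using $0\leq\eta_R\leq 1$ with $\ell$ chosen so that $\ell-\tfrac{2p}{p-1}\geq 0$, the integrand is dominated by
$$
C(\ln R)^{-\frac{p}{p-1}}|x|^{\frac{(k-2)p-\sigma-k}{p-1}}.
$$
Passing to polar coordinates, the radial exponent becomes
$$
\frac{(k-2)p-\sigma-k}{p-1}+N-1=\frac{p(N+k-2)-(N+\sigma+k)}{p-1}-1,
$$
which collapses to $-1$ precisely because of the criticality hypothesis $p(N+k-2)=N+\sigma+k$. Therefore $\int_{\sqrt R}^{R} r^{-1}\,dr=\tfrac{1}{2}\ln R$, and the spatial integral is at most $C(\ln R)^{-\frac{p}{p-1}+1}=C(\ln R)^{-\frac{1}{p-1}}$. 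Multiplying by the time factor yields \eqref{estI2psi*}.

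The only delicate step is verifying that the radial exponent is exactly $-1$; everything else is a routine imitation of the computation already done for $I_2(\varphi^*)$, plus the elementary observation that on $C(\sqrt R,R)$ the functions $\mathcal{D}$ and $\mathcal{N}=|x|^k$ differ only by a factor bounded between $1/2$ and $1$ for $R$ large.
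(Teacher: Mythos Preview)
Your proposal is correct and follows essentially the same approach as the paper's proof: both confine the integral to $C(\sqrt R,R)$, expand $L_kG^*$ using $L_k\mathcal{D}=0$ and the bounds \eqref{pptsetaRII}, exploit that $\mathcal{D}(x)\sim |x|^k$ on this annulus (the paper keeps the factor $(1-|x|^{2-N-k})^{-1/(p-1)}$ explicitly while you absorb it via $\mathcal{D}\geq\tfrac12|x|^k$), and then invoke the criticality relation to reduce the radial integral to $\int_{\sqrt R}^R r^{-1}\,dr=\tfrac12\ln R$. No gaps.
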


\begin{proof}
In view of \eqref{DF-LK}, \eqref{I2phi}  and \eqref{testfP12-cr}, we get 
\begin{equation}\label{ayacontinue}
I_2(\varphi)=\left(\int_0^\infty \iota(t)\,dt\right)\left(\int_{B_1^c} |x|^{\frac{-\sigma}{p-1}} {G^*}^{\frac{-1}{p-1}}(x)|L_kG^*(x)|^{\frac{p}{p-1}}\,dx\right).
\end{equation}	
By the definition of the function $G^*$ and using \eqref{pptsetaRII},   elementary calculations show that 
$$
|L_kG^*(x)| \leq C \eta_R^{\ell-2}(x)(\ln R)^{-1}\left(|x|^{-2}\mathcal{D}(x)+|x|^{-1}|\nabla \mathcal{D}(x)|\right),\quad x\in C(\sqrt{R},R).
$$
Since $k>2-N$, by the definition of the function $\mathcal{D}$, we deduce that for all $x\in C(\sqrt{R},R)$, 
$$
|L_kG^*(x)| \leq C \frac{|x|^{k-2}}{\ln R}\eta_R^{\ell-2}(x),
$$
which yields (in view of \eqref{pptsetaRI}) 
\begin{eqnarray*}
\int_{B_1^c} |x|^{\frac{-\sigma}{p-1}} {G^*}^{\frac{-1}{p-1}}(x)|L_kG^*(x)|^{\frac{p}{p-1}}\,dx&=& \int_{C(\sqrt R,R)} |x|^{\frac{-\sigma}{p-1}} {G^*}^{\frac{-1}{p-1}}(x)|L_kG^*(x)|^{\frac{p}{p-1}}\,dx	\\
&\leq & C (\ln R)^{\frac{-p}{p-1}}\int_{C(\sqrt R,R)} |x|^{\frac{(k-2)p-(\sigma+k)}{p-1}}\left(1-|x|^{2-N-k}\right)^{\frac{-1}{p-1}}\\
&\leq &  C(\ln R)^{\frac{-p}{p-1}} \int_{r=\sqrt R}^Rr^{\frac{(N+k-2)p-(N+\sigma+k)}{p-1}-1}\,dr.
\end{eqnarray*}
Since $p(N+k-2)=N+\sigma+k$, we deduce that 
\begin{equation}\label{IYAH}
\int_{B_1^c} |x|^{\frac{-\sigma}{p-1}} {G^*}^{\frac{-1}{p-1}}(x)|L_kG^*(x)|^{\frac{p}{p-1}}\,dx\leq C 	(\ln R)^{\frac{-1}{p-1}}. 
\end{equation}
Thus, \eqref{OKvasi} (with $\alpha=0$), \eqref{ayacontinue} and \eqref{IYAH} give us \eqref{estI2psi*}.
\end{proof}

\section{Proof of the main results}\label{sec3}

This section is devoted to the proofs of our main results. 

\begin{proof}[Proof of Theorem \ref{T1}] (i) 
We argue by contradiction by assuming that 	$u\in L^p_{loc}(\Omega)$ is a global weak solution to problem \eqref{P} under the boundary condition \eqref{BC1}. Then, by Proposition \ref{PR1} and Lemma \ref{L1}, there holds
\begin{equation}\label{gestPrT1}
\int_\Gamma  f\varphi\, dS_x\,dt\leq C \sum_{i=1}^2 I_i(\varphi),	
\end{equation}
where the function $\varphi$ is defined by \eqref{testfP11}.  On the other hand, one has
\begin{eqnarray}\label{integrbdT1}
\nonumber \int_\Gamma  f\varphi\, dS_x\,dt &=& \left(\int_0^\infty \iota(t)\,dt\right)	\left(\int_{\partial B_1} f(x) F(x)\,dS_x\right)\\
\nonumber  &=& \left(\int_0^\infty \iota(t)\,dt\right)	\left(\int_{\partial B_1}f(x) \xi_R^\ell(x)\,dS_x\right)\\
\mbox{(by \eqref{OKvasi} and \eqref{pptsxiR})}&=& CT I_f.
\end{eqnarray}
Hence, by \eqref{gestPrT1}, \eqref{integrbdT1}, Lemma \ref{L2.7} and \eqref{estI2phi}, we obtain
\begin{equation}\label{wrkprT1}
I_f\leq C 	\left(T^{-\frac{\alpha+p}{p-1}}\left(\ln R+ R^{N+k-\frac{\sigma}{p-1}}\right)+T^{-\frac{\alpha }{p-1}}R^{\frac{(k-2)p-(\sigma+k)}{p-1}+N}\right). 
\end{equation}
Thus, fixing $R$ and passing to the limit as $T\to \infty$ in \eqref{wrkprT1}, we get (since $\alpha>0$) $I_f\leq 0$, which contradicts the condition $I_f>0$. This proves part (i) of Theorem \ref{T1}.

\medskip
\noindent (ii) As previously, assume that   $u\in L^p_{loc}(\Omega)$ is a global weak solution to problem \eqref{P} under the boundary condition \eqref{BC3}. Then, by Proposition \ref{PR2} and Lemma \ref{L3}, there holds
\begin{equation}\label{gestPrT1ii}
-\int_\Gamma  f \frac{\partial \psi}{\partial \nu}\, dS_x\,dt\leq C \sum_{i=1}^2 I_i(\psi),	
\end{equation}
where the function $\psi$ is defined by \eqref{testfP12}. On the other hand, by \eqref{useful-Dirichlet-ND}, one has
\begin{eqnarray}\label{integrbdT1ii}
\nonumber -\int_\Gamma  f \frac{\partial \psi}{\partial \nu}\, dS_x\,dt &=& C \int_\Gamma   \iota(t) f(x)\,dS_x\,dt\\
&=& CT I_f. 
\end{eqnarray}
Hence, by \eqref{gestPrT1ii}, \eqref{integrbdT1ii}, Lemma \ref{L2.9} and Lemma \ref{L2.10}, we obtain
\begin{equation}\label{Iktibha}
I_f\leq C \left(T^{-\frac{\alpha+p}{p-1}}A(R)+	T^{-\frac{\alpha}{p-1}}B(R)\right),
\end{equation}
where 
$$
A(R)=\left\{\begin{array}{llll}
\ln R+R^{N+k-\frac{\sigma}{p-1}}, &\mbox{if}& 	k>2-N,\\
R^{2-\frac{\sigma}{p-1}}+\ln R, &\mbox{if}& 	k<2-N,\\ 
\ln R\left(R^{2-\frac{\sigma}{p-1}}+\ln R\right), &\mbox{if}& k=2-N
\end{array}
\right.
$$
and
$$
B(R)=\left\{\begin{array}{llll}
R^{\frac{(k-2)p-(\sigma+k)}{p-1}+N},&\mbox{if}& k>2-N,\\ 
R^{\frac{-(\sigma+2)}{p-1}},&\mbox{if}& k<2-N,\\ 
R^{\frac{-(\sigma+2)}{p-1}}\ln R	,&\mbox{if}& k=2-N.
\end{array}
\right. 
$$
Thus, fixing $R$ and passing to the limit as $T\to \infty$ in \eqref{Iktibha}, we get a contradiction with  $I_f>0$. This proves part (ii) of Theorem \ref{T1}. 
\end{proof}

\begin{proof}[Proof of Theorem \ref{T2}](i) Suppose that $u\in L^p_{loc}(\Omega)$ is a global weak solution to problem \eqref{P} under the boundary condition \eqref{BC1}. Then, by the proof of part (i) of Theorem \ref{T1}, we get \eqref{wrkprT1}. Taking $T=R^\theta$, $\theta>0$, \eqref{wrkprT1} reduces to 
\begin{equation}\label{bahiyeser}
I_f\leq C 	\left(R^{-\frac{\theta(\alpha+p)}{p-1}}\ln R+ R^{N+k-\frac{\sigma}{p-1}-\frac{\theta(\alpha+p)}{p-1}}+R^{\frac{(k-2+N)p-(\sigma+k+N)-\theta \alpha}{p-1}}\right).
\end{equation}	
On the other hand, observe that for $\theta=2$, one has
$$
N+k-\frac{\sigma}{p-1}-\frac{\theta(\alpha+p)}{p-1}=\frac{(k-2+N)p-(\sigma+k+N)-\theta \alpha}{p-1}=\frac{p(k-2+N)-(\sigma+k+N+2\alpha)}{p-1}.
$$
Hence, for $\theta=2$, \eqref{bahiyeser} reduces to  
\begin{equation}\label{hathikahiya}
I_f\leq C 	 \left(	R^{-\frac{2(\alpha+p)}{p-1}}\ln R+R^{\frac{p(k-2+N)-(\sigma+k+N+2\alpha)}{p-1}}\right).
\end{equation}
Thus, using that $\alpha+p>0$ and \eqref{blowup-cd}, by passing to the limit as $R\to \infty$ in \eqref{hathikahiya}, we obtain a contradiction with $I_f>0$. This proves part (i) of Theorem \ref{T2}. 

\medskip  \noindent(ii) Always we use the contradiction argument by supposing that $u\in L^p_{loc}(\Omega)$ is a global weak solution to problem \eqref{P} under the boundary condition \eqref{BC3}. Then, by the proof of part (ii) of Theorem \ref{T1}, we get \eqref{Iktibha}, which reduces to (since $k>2-N$)
$$
I_f\leq C \left(T^{-\frac{\alpha+p}{p-1}}\left(\ln R+R^{N+k-\frac{\sigma}{p-1}}\right)+	T^{-\frac{\alpha}{P-1}}R^{\frac{(k-2)p-(\sigma+k)}{p-1}+N}\right).
$$
As in the proof of part (i), taking $T=R^2$, the above estimate reduces to \eqref{hathikahiya}. Hence, we reach a contradiction with $I_f>0$. This proves part (ii) of Theorem \ref{T2}. 
\end{proof}

\begin{proof}[Proof of Theorem \ref{T3}](i) Let $u\in L^p_{loc}(\Omega)$ be a global weak solution to problem \eqref{P} under the boundary condition \eqref{BC1}. Then \eqref{hathikahiya} holds. Since $k\leq 2-N$ and $\sigma+2(\alpha+1)>2-N-k$, then 
$$
p(k-2+N)\leq 0<\sigma+k+N+2\alpha. 
$$
Hence, passing to the limit as $R\to \infty$ in \eqref{hathikahiya}, we obtain a contradiction with $I_f>0$. This proves part (i) of Theorem \ref{T3}. 

\medskip\noindent(ii) Assume that  $u\in L^p_{loc}(\Omega)$ be a global weak solution to problem \eqref{P} under the boundary condition \eqref{BC3}.  Then, by the proof of part (ii) of Theorem \ref{T1}, we get \eqref{Iktibha}, which reduces to (since $k\leq 2-N$)
$$
I_f\leq C \left(T^{-\frac{\alpha+p}{p-1}}\ln R\left(R^{2-\frac{\sigma}{p-1}}+\ln R\right)+	T^{-\frac{\alpha}{p-1}}R^{\frac{-(\sigma+2)}{p-1}}\ln R	\right).
$$ 
Taking $T=R^2$, the above inequality reduces to 
\begin{equation}\label{9rib}
I_f\leq C \left(R^{\frac{-\sigma-2(\alpha+1)}{p-1}}\left(\ln R+1\right)+(\ln R)^2 R^{\frac{-2(\alpha+p)}{p-1}}\right).	
\end{equation}
Since $\alpha+p>0$ and $\sigma+2(\alpha+1)>0$, passing to the limit as $R\to \infty$ in \eqref{9rib}, we reach a contradiction with $I_f>0$. This proves part (ii) of Theorem \ref{T3}.
\end{proof}

\begin{proof}[Proof of Theorem \ref{T4}](i) Let 	$u\in L^p_{loc}(\Omega)$ be a global weak solution to problem \eqref{P} under the boundary condition \eqref{BC1}. Then, by Proposition \ref{PR1} and Lemma \ref{L2}, there holds
\begin{equation}\label{gestPrT4}
\int_\Gamma  f\varphi^*\, dS_x\,dt\leq C \sum_{i=1}^2 I_i(\varphi^*),	
\end{equation}
where the function $\varphi^*$ is defined by \eqref{testfP11-cr}. 	On the other hand, one has
\begin{eqnarray}\label{integrbdT4}
\nonumber \int_\Gamma  f\varphi^*\, dS_x\,dt &=& \left(\int_0^\infty \iota(t)\,dt\right)	\left(\int_{\partial B_1} f(x) F^*(x)\,dS_x\right)\\
\nonumber  &=& \left(\int_0^\infty \iota(t)\,dt\right)	\left(\int_{\partial B_1}f(x) \eta_R^\ell(x)\,dS_x\right)\\
\mbox{(by \eqref{OKvasi} and \eqref{pptsetaRI})}&=& CT I_f.
\end{eqnarray}
Hence, by \eqref{gestPrT4}, \eqref{integrbdT4}, Lemma \ref{L2.7} (with $\alpha=0$) and \eqref{estI2phistar}, we obtain
\begin{equation}\label{wrkprT4}
I_f\leq C 	\left(T^{-\frac{p}{p-1}}\left(\ln R+ R^{N+k-\frac{\sigma}{p-1}}\right)+(\ln R)^{\frac{-1}{p-1}}\right). 
\end{equation}
Hence, taking $T=R^\theta$ with  $\theta>\max\left\{0,\frac{(p-1)(N+k)-\sigma}{p}\right\}$ and passing to the limit as $R\to \infty$ in \eqref{wrkprT4}, we obtain a contradiction with $I_f>0$. This proves part (i) of Theorem \ref{T4}. 

\medskip \noindent(ii) Now, suppose that $u\in L^p_{loc}(\Omega)$ be a global weak solution to problem \eqref{P} under the boundary condition \eqref{BC3}. Then, by Proposition \ref{PR2} and Lemma \ref{L4}, there holds
\begin{equation}\label{gestPrT4ii}
-\int_\Gamma  f \frac{\partial \psi^*}{\partial \nu}\, dS_x\,dt\leq C \sum_{i=1}^2 I_i(\psi^*)
\end{equation}
where the function $\psi^*$ is defined by \eqref{testfP12-cr}. On the other hand, by \eqref{useful-Dirichlet-ND-cr} and \eqref{OKvasi} (with $\alpha=0$), we have 
\begin{eqnarray}\label{9ribbarcha}
\nonumber -\int_\Gamma  f \frac{\partial \psi^*}{\partial \nu}\, dS_x\,dt&=&C \int_\Gamma  \iota(t)f(x)\,dS_x\,dt\\
&=& CTI_f.
\end{eqnarray}
Hence, by \eqref{blowup-cd-cr}, \eqref{gestPrT4ii}, \eqref{9ribbarcha},  Lemma \ref{L2.9} and Lemma \ref{L2.11}, we obtain \eqref{wrkprT4}. The rest of the proof is the same as that of part (i). Then, the proof of part (ii) of Theorem \ref{T4} is completed.
\end{proof}

\section*{Acknowledgements}

The second author is supported by Researchers Supporting Project number (RSP--2021/4), King Saud University, Riyadh, Saudi Arabia.
The third author is supported by the National Natural Science Foundation of China (No.11501303), and Tianjin Natural Science Foundation (No.19JCQNJC14600).

\end{document}